\newtheorem{theorem}{Theorem}
\newtheorem{lemma}{Lemma}
\newtheorem{corollary}{Corollary}
\newtheorem{assumption}{Assumption}
\title{ Neural-trust-region algorithm for unconstrained optimization (Part 1) }
\author{
 Mostafa Rezapour \\
 Department of Mathematics and Statistics\\
 Washington State University\\
  Pullman, WA 99163 \\
  \texttt{mostafa.rezapour@wsu.edu} \\
   \And
 Thomas Asaki \\
  Department of Mathematics and Statistics\\
 Washington State University\\
  Pullman, WA 99163 \\
  \texttt{tasaki@wsu.edu} \\
  }
\begin{document}
\maketitle

\begin{abstract}

In this paper (part 1), we describe a derivative-free trust-region method for solving unconstrained optimization problems. We will discuss a method when we relax the model order assumption and use artificial neural network techniques to build a computationally relatively inexpensive model. We directly find an estimate of the objective function minimizer without explicitly constructing a model function. Therefore, we need to have the neural-network model derivatives, which can be obtained simply through a back-propagation process.

\end{abstract}

\keywords{Nonlinear optimization \and Trust-region methods \and Derivative-free optimization \and Deep learning \and Universal approximation theorem}

\section{Introduction}

There are different iterative numerical procedures, such as line-search and trust-region methods, for finding a local solution of the problem 
\begin{equation} \label{e1}
\min\limits_{x \in \mathbb{R}^{n} } f(x),
\end{equation}
where $f : \mathbb{R}^{n} \rightarrow \mathbb{R}$ is at least twice continuously differentiable and is bounded below \cite{conn2000trust}. The trust-region method was initially introduced by Powell in the 1970s for solving nonlinear optimization problems. At each iterate $x^k$, a basic trust-region first defines a model $m_k(x)$ of the objective function within an appropriate neighborhood of $x^k$ and then finds an approximate solution of the trust-region subproblem
\begin{equation}\label{e2}
s^k=\arg \min\limits_{\| s \| \leq   \Delta_k} { m_k,}
\end{equation}
where $\Delta_k$ is the trust-region radius and $\|\cdot\|$ is any vector norm. In trust-region methods, a quadratic model $m_k$ of the objective function is widely considered
\begin{equation}\label{e3}
m_k (x^k+ s ) = f_k +g^T_k s +\dfrac{1}{2} s^T B_ks, 
\end{equation}
where $f_k=f(x^k)$, $g_k=\nabla f(x^k)$ and $B_k$ is either the Hessian of the objective function at the current point $x^k$ or an approximation of it \cite{conn2000trust}.
To find how good the approximate solution $s^k$ of the trust-region subproblem (\ref{e2}) and the agreement between the model and the objective function in the trust-region are, the following agreement ratio can be used 
\begin{equation}\label{e4}
\rho_k = \frac{f(x^k) - f( x^k + s^k )}{ m_k ( x^k )-m_k( x^k+s^k )}.
\end{equation}
For given constants $0<\eta_1<\eta_2\leq1$, a basic trust-region algorithm works as follows. If $\rho_k \geq \eta_2$, which means there exists a very good agreement between the model and the objective function in the current trust-region, then $x^{k+1}=x^k+s^k$ is accepted as the new trial point and the trust-region radius is enlarged. If $\eta_1\leq \rho_k <\eta_2$, which means the agreement is good, then $x^{k+1}=x^k+s^k$ is accepted as the new point but the trust-region radius remains the same.  If $\rho_k<\eta_1$, which means the agreement is weak, then $x^{k+1}=x^k$ and the trust-region radius is reduced \cite{conn2000trust}.

One of the most widely used methods for solving (\ref{e2}) is the Steihaug-Toint method, which uses the conjugate-gradient algorithm and approximates a solution with minimal computational cost \cite{conn2000trust}. However, Gould et al. \cite {gould1999solving} noted that the obtained approximate solution $s^k$ for (\ref{e2}) by the Steihaug-Toint method may be the first boundary point, which is the point on the boundary of the trust region in the direction of $s^k$. They proposed GLTR method in which Lanczos process and More–Sorensen algorithm are used to reduce $B_k$ to tridiagonal form and solve the obtained reduced subproblem. Erway et al. \cite{erway2009iterative} proposed an extension of Steihaug-Toint method for large-scale optimization problems with two phases, phased-SSM method, in which if in phase 1, the subproblem approximate solution obtained from the modified Steihaug-Toint algorithm lies on the boundary, then phase 2 becomes active and a conjugate-gradient based SSM method is used to solve the constrained problem over a sequence of evolving low-dimensional subspaces.

For general large-scale optimization, where $B_k$ is not a quasi-Newton Hessian, solving subproblem (\ref{e2}) is often computationally expensive. For example, the Moré-Sorensen method solves $(B_k+\sigma I)s = -g_k$ at each iteration by computing the Cholesky factorization of $B_k+\sigma I$ to find a solution $(s^*,\sigma^*)$ that satisfies the optimality conditions for the trust-region subproblem. Quasi-Newton methods, which generate a sequence of matrices which approximate the Hessian (or its inverse) of the objective function, need to store $n(n + 1)/2$ elements for each approximate symmetric matrix in the sequence. Hence, Quasi-Newton methods are not computationally efficient for large-scale optimization problems. Given the number of L-BFGS updates $M$, limited-memory BFGS quasi-Newton methods, which generate matrices using information from the last $M$ iterations, are often used when the second derivative is prohibitively expensive (\cite{byrd1995limited}; \cite{liu1989limited}; \cite{morales2002numerical}; \cite{nocedal1980updating}). However, L-BFGS methods require to solve a system of the form $(B_k+D)q=z$, which is often expensive. The two-loop recursion (\cite{nocedal1980updating};\cite{nocedal2006regularization}) and the recursion formula proposed by Erway et al \cite{erway2012limited} can be used to solve the system.

There are many problems in which derivatives are not available or computationally very expensive. Model-based derivative-free methods are often utilized to solve (\ref{e1}) by replacing a computationally expensive function with one of a computationally cheaper surrogate model. The model is often constructed by applying a multivariate interpolation at the available objective function values $f(x_i)$.  A quadratic objective function model of the form (\ref{e2}) is often chosen, where $g_k$ and $B_k$ are determined by the interpolation process at past points. Given an interpolation set $Y=\{y^1, y^2, \dots, y^p\}$, it is often required that 
\begin{equation}\label{MMarch21}
f(y^i)=m_k(y^i),
\end{equation}
for $i=1, 2, \dots, p$. If we assume the model to be quadratic, then the cardinality of the interpolation set $|Y|=p$ must satisfy 
\begin{equation}
p\geq\frac{1}{2}(n+1)(n+2)
\end{equation}
to ensure that that the quadratic model is entirely determined \cite{conn2000trust}. To ensure the existence and uniqueness of the quadratic interpolant, a basis $\{\phi_i(.)\}$ of the linear space of $n$ dimensional quadratics is chosen and the model is expressed as 
\begin{equation}\label{m1}
m_k(x)=\sum_{k=1}^{p}\alpha_k \phi_k(x).
\end{equation}
So (\ref{MMarch21}) might be expressed as 
\begin{equation}
f(y^i)=\sum_{k=1}^{p}\alpha_k \phi_k(y^i),
\end{equation}
for $i=1, 2, \dots, p$. An interpolation set $Y$ is said to be poised if the interpolation determinant $D(Y)$ is nonzero \cite{conn2000trust},
\begin{equation}\label{ap123}
   D(Y)=det
  \left[ {\begin{array}{cccc}
   \phi_1(y^1) & \phi_2(y^1)& \dots & \phi_p(y^1) \\
   \phi_1(y^2) & \phi_2(y^2)& \dots & \phi_p(y^2) \\
       & & \vdots &  \\
    \phi_1(y^p) & \phi_2(y^p)& \dots & \phi_p(y^p) \\  
  \end{array} } \right] \ne 0.
\end{equation}
So a unique quadratic model can be determined if and only if the interpolation set $Y$ is poised. For an interpolation set $Y$, Algorithm 9.4.1 in \cite{conn2000trust} (CNP procedure in \cite{conn1997convergence}) gives a procedure for constructing the basis of fundamental Newton polynomials. 

Algorithm 9.1 in \cite{nocedal2006numerical} describes a model-based derivative-free trust-region method in which the step acceptance, trust-region update and the interpolation set update strategies are based on the agreement ratio. Given model $m_k$ at iteration $k$, if the agreement between the model and the objective function is good, $\rho_k\geq \eta_1$, then 
\begin{equation}
\Delta_{k+1}=\gamma_2 \Delta_k,
\end{equation}    
and the interpolation set $Y=\{y^1, y^2, \dots,y^-, \dots, y^p\}$ is replaced by 
\begin{equation}
Y^+=\{y^1, y^2, \dots,y^+, \dots, y^p\}, 
\end{equation}
where $\gamma_2 \geq 1$, 
\begin{equation}\label{ap111}
|D(Y^+)| \leq |L(y^+,y^-)|\hspace{.1cm}|D(Y)| 
\end{equation}
and $L(.,.)$ is the Lagrangian polynomial with degree at most two that satisfies
\begin{equation}\label{ap112}
L(y^+,y^-)=
\begin{cases}
1 \hspace{1cm} \text{if } y^+=y^-\\
0 \hspace{1cm} \text{if } y^+\neq y^-.
\end{cases}
\end{equation}
Using (\ref{ap111}) and (\ref{ap112}), we might select $y^-$ to be
\begin{equation}
y^-=\text{arg} \max\limits_{y^i \in Y} L(x^+, y^i),
\end{equation}
where
\begin{equation}
{({x^k})}^+=y^+=x^k+s^k.
\end{equation}
Conn et al. \cite{conn1997convergence} define an adequate geometry of the interpolation set for a quadratic model. For instance to construct a quadratic model using the CNP procedure in \cite{conn1997convergence}, the interpolation set $Y$ is first organized into $d+1=3$ blocks $Y=\{ \mathbb{Y}^{[0]}, \mathbb{Y}^{[1]}, \mathbb{Y}^{[2]} \}$, where the cardinality of $\mathbb{Y}^{[l]}$ is 
\[
|\mathbb{Y}^{[l]}|= {{l+n-1} \choose l}
\]
for $l=0, 1, 2$. Then for each $y^{i,[l]} \in \mathbb{Y}^{[l]}$ a corresponding Newton fundamental polynomial of degree $l$ satisfying
\[ N_ i^{[l]}(y^{j,[m]})=\delta_{ij}\delta_{lm},
\]
for all $y^{j,[m]} \in \mathbb{Y}^{[m]}$ with $m\leq l$. Then the model is constructed as
\[m(x)=\sum_{y^{i,[l]} \in Y} d^{i,[l]}(Y,f)N_ i^{[l]}(x),
\]
where $d^{i,[l]}(Y,f)$ are generalized finite differences applied on $f$. At iterate $x^k$, the given interpolation set $Y=\left\{y^i: \|x^k-y^i \|\leq \Delta_k \text{ for all }i=1, 2, \dots, p \right\}$ is adequate in the current trust-region if the cardinality of $Y$ is at least $n+1$, and for all interpolation points
\[\left |N_ i^{[l]}(y^{j,[l+1])}) \right|\leq \kappa_n, 
\]
and 
\[\left |N_ i^{[z]}(x)\right|\leq \kappa_n,
\]
for all $ i=1, 2, \dots, \mathbb{Y}^{[l]}, j=1, 2, \dots, \mathbb{Y}^{[l+1]}, l=0, 1, \dots, d$ (for a quadratic model $d=2$), $\kappa_n >2^{\mathbb{Y}^{[d]}}$, and $x$ is any point in the current trust-region \cite{conn1997convergence}. 

\noindent \textbf{Example 1 (A complete Newton polynomial basis)} The problem is to do a quadratic interpolation 
on the interpolation set $Y=\left \{Y^{[0]}, Y^{[1]}, Y^{[2]} \right \}$, where
\begin{equation}
\begin{aligned}
Y^{[0]}=\left \{ (0,0) \right \},\hspace{1cm}\\
Y^{[1]}=\left \{  (\frac{1}{2},0), (0,\frac{1}{2})    \right \},\\
Y^{[2]}=\left \{   (1,0), (\frac{1}{2},\frac{1}{2}), (0,1)   \right \}.
\end{aligned}
\end{equation}
We now apply Algorithm 9.4.1 in \cite{conn2000trust} to the initial quadratic polynomial basis
\begin{equation}
\beta=\left \{ 1, x_1, x_2, x_1^2, x_1x_2, x_2^2\right \}.
\end{equation}
We set
\begin{equation}
\begin{aligned}
N_1^{[0]}(x_1,x_2)=1,\hspace{4cm}\\
N_1^{[1]}(x_1,x_2)=x_1, \hspace{1cm}N_2^{[1]}(x_1,x_2)=x_2,\hspace{2cm}\\
N_1^{[2]}(x_1,x_2)=x_1^2,\hspace{1cm} N_2^{[2]}(x_1,x_2)=x_1x_2, \hspace{1cm} N_3^{[2]}(x_1,x_2)=x_2^2,\\
Y_{\text{temp}}=\left\{ \right\}.\hspace{4.7cm}
\end{aligned}
\end{equation}

\noindent \textbf{Calculating the Newton fundamental polynomial $i=0$ and $j=1$:} 

Since 
\begin{equation}
\begin{aligned}
N_1^{[0]}(0,0)=1,\hspace{3cm}\\
N_1^{[1]}(\frac{1}{2},0)=\frac{1}{2}, \hspace{1cm}N_2^{[1]}(0,\frac{1}{2})=\frac{1}{2},\hspace{1cm}\\
N_1^{[2]}(1,0)=1,\hspace{1cm} N_2^{[2]}(\frac{1}{2},\frac{1}{2})=\frac{1}{4}, \hspace{1cm} N_3^{[2]}(0,1)=1,\\
\end{aligned}
\end{equation}
we can choose $y_j^{[i]} \in \left \{ (0,0), (\frac{1}{2},0), (0,\frac{1}{2}), (1,0), (\frac{1}{2},\frac{1}{2}), (0,1)\right \}$. We choose $y_1^{[0]}=(0,0)$, and set 
\begin{equation*}
Y_{\text{temp}}=\left \{(0,0) \right \}.
\end{equation*}
Then we normalize $N_1^{[0]}$,
\begin{equation*}
N_1^{[0]}(x_1,x_2)=1.
\end{equation*}
Newton polynomials will be updated by
\begin{equation}
\begin{aligned}
{}^*N_1^{[0]}(x_1,x_2)=1,\hspace{4cm}\\
N_1^{[1]}(x_1,x_2)=x_1, \hspace{.4cm}N_2^{[1]}(x_1,x_2)={x_2},\hspace{2.5cm}\\
N_1^{[2]}(x_1,x_2)=x_1^2,\hspace{.4cm} N_2^{[2]}(x_1,x_2)=x_2^2, \hspace{.4cm} N_3^{[2]}(x_1,x_2)=x_1x_2.\\
\end{aligned}
\end{equation}

\noindent \textbf{Calculating the Newton fundamental polynomial $i=1$ and $j=1$:} 
\begin{equation}
\begin{aligned}
N_1^{[1]}(\frac{1}{2},0)=\frac{1}{2}, \hspace{1cm}N_2^{[1]}(0,\frac{1}{2})=\frac{1}{2},\hspace{2cm}\\
N_1^{[2]}(1,0)=1,\hspace{1cm} N_2^{[2]}(\frac{1}{2},\frac{1}{2})=\frac{1}{4}, \hspace{1cm} N_3^{[2]}(0,1)=1,\\
\end{aligned}
\end{equation}
we can choose a $y_j^{[i]} \in \left \{  (\frac{1}{2},0), (0,\frac{1}{2}), (1,0), (\frac{1}{2},\frac{1}{2}), (0,1)\right \}$. We choose $y_1^{[1]}= (\frac{1}{2},0)$, and set 
\begin{equation*}
Y_{\text{temp}}=\left \{(0,0),  (\frac{1}{2},0) \right \}.
\end{equation*}
Then we normalize $N_1^{[1]}$,
\begin{equation*}
N_1^{[1]}(x_1,x_2)=2x_1.
\end{equation*}
Newton polynomials will be updated by
\begin{equation}
\begin{aligned}
{}^*N_1^{[0]}(x_1,x_2)=1,\hspace{4cm}\\
{}^*N_1^{[1]}(x_1,x_2)=2x_1, \hspace{.4cm}N_2^{[1]}(x_1,x_2)={x_2},\hspace{2.5cm}\\
N_1^{[2]}(x_1,x_2)=x_1^2- \frac{1}{2}x_1,\hspace{.4cm} N_2^{[2]}(x_1,x_2)=x_1x_2, \hspace{.4cm} N_3^{[2]}(x_1,x_2)=x_2^2.\\
\end{aligned}
\end{equation}

\noindent \textbf{Calculating the Newton fundamental polynomial $i=1$ and $j=2$:} 
\begin{equation}
\begin{aligned}
N_2^{[1]}(0,\frac{1}{2})=\frac{1}{2},\hspace{4cm}\\
N_1^{[2]}(1,0)=\frac{1}{2},\hspace{1cm} N_2^{[2]}(\frac{1}{2},\frac{1}{2})=\frac{1}{4}, \hspace{1cm} N_3^{[2]}(0,1)=1,\\
\end{aligned}
\end{equation}
we can choose a $y_j^{[i]} \in \left \{  (0,\frac{1}{2}), (1,0), (\frac{1}{2},\frac{1}{2}), (0,1)\right \}$. We choose $y_2^{[1]}=(0,\frac{1}{2})$, and set 
\begin{equation*}
Y_{\text{temp}}=\left \{(0,0),  (\frac{1}{2},0),  (0,\frac{1}{2})  \right \}.
\end{equation*}
Then we normalize $N_2^{[1]}$,
\begin{equation*}
N_1^{[1]}(x_1,x_2)=2{x_2}.
\end{equation*}
Newton polynomials are updated by
\begin{equation}
\begin{aligned}
{}^*N_1^{[0]}(x_1,x_2)=1,\hspace{4cm}\\
{}^*N_1^{[1]}(x_1,x_2)=2x_1, \hspace{.4cm}{}^*N_2^{[1]}(x_1,x_2)=2{x_2},\hspace{2.5cm}\\
N_1^{[2]}(x_1,x_2)=x_1^2- \frac{1}{2}x_1,\hspace{.4cm} N_2^{[2]}(x_1,x_2)=x_1x_2, \hspace{.4cm} N_3^{[2]}(x_1,x_2)=x_2^2- \frac{1}{2}x_2.\\
\end{aligned}
\end{equation}

\noindent \textbf{Calculating the Newton fundamental polynomial $i=2$ and $j=1$:} 
\begin{equation}
\begin{aligned}
N_1^{[2]}(1,0)=\frac{1}{2},\hspace{1cm} N_2^{[2]}(\frac{1}{2},\frac{1}{2})=\frac{1}{4}, \hspace{1cm} N_3^{[2]}(0,1)=\frac{1}{2},\\
\end{aligned}
\end{equation}
we can choose a $y_j^{[i]} \in \left \{(1,0), (\frac{1}{2},\frac{1}{2}), (0,1)\right \}$. We choose $y_1^{[2]}=(1,0)$, and set 
\begin{equation*}
Y_{\text{temp}}=\left \{(0,0),  (\frac{1}{2},0),  (0,\frac{1}{2}), (1,0)  \right \}.
\end{equation*}
Then we normalize $N_1^{[2]}$,
\begin{equation*}
N_1^{[2]}(x_1,x_2)=2x_1^2-x_1.
\end{equation*}
Newton polynomials will be updated by
\begin{equation}
\begin{aligned}
{}^*N_1^{[0]}(x_1,x_2)=1,\hspace{4cm}\\
{}^*N_1^{[1]}(x_1,x_2)=2x_1, \hspace{.4cm}{}^*N_2^{[1]}(x_1,x_2)=2{x_2},\hspace{2.5cm}\\
{}^*N_1^{[2]}(x_1,x_2)=2x_1^2-x_1,\hspace{.4cm} N_2^{[2]}(x_1,x_2)=x_1x_2, \hspace{.4cm} N_3^{[2]}(x_1,x_2)=x_2^2- \frac{1}{2}x_2.\\
\end{aligned}
\end{equation}

\noindent \textbf{Calculating the Newton fundamental polynomial $i=2$ and $j=2$:} 
\begin{equation}
\begin{aligned}
\hspace{1cm} N_2^{[2]}(\frac{1}{2},\frac{1}{2})=\frac{1}{4}, \hspace{1cm} N_3^{[2]}(0,1)=\frac{1}{2},\\
\end{aligned}
\end{equation}
we can choose a $y_j^{[i]} \in \left \{(\frac{1}{2},\frac{1}{2}), (0,1)\right \}$. We choose $y_2^{[2]}=(\frac{1}{2},\frac{1}{2})$, and set 
\begin{equation*}
Y_{\text{temp}}=\left \{(0,0),  (\frac{1}{2},0),  (0,\frac{1}{2}), (1,0), (\frac{1}{2},\frac{1}{2})  \right \}.
\end{equation*}
Then we normalize $N_2^{[2]}$,
\begin{equation*}
N_2^{[2]}(x_1,x_2)=4x_1x_2.
\end{equation*}
Newton polynomials are updated by
\begin{equation}
\begin{aligned}
{}^*N_1^{[0]}(x_1,x_2)=1,\hspace{4cm}\\
{}^*N_1^{[1]}(x_1,x_2)=2x_1, \hspace{.4cm}{}^*N_2^{[1]}(x_1,x_2)=2{x_2},\hspace{2.5cm}\\
{}^*N_1^{[2]}(x_1,x_2)=2x_1^2-x_1,\hspace{.4cm} {}^*N_2^{[2]}(x_1,x_2)=4x_1x_2, \hspace{.4cm} N_3^{[2]}(x_1,x_2)=x_2^2- \frac{1}{2}x_2.\\
\end{aligned}
\end{equation}

\noindent \textbf{Calculating the Newton fundamental polynomial $i=3$ and $j=2$:} 
\begin{equation}
\begin{aligned}
\hspace{1cm} N_3^{[2]}(0,1)=\frac{1}{2},\\
\end{aligned}
\end{equation}
we can choose a $y_j^{[i]} \in \left \{ (0,1)\right \}$. We choose $y_3^{[2]}=(0,1)$, and set 
\begin{equation*}
Y_{\text{temp}}=\left \{(0,0),  (\frac{1}{2},0),  (0,\frac{1}{2}), (1,0), (\frac{1}{2},\frac{1}{2}), (0,1) \right \}.
\end{equation*}
Then we normalize $N_3^{[2]}$,
\begin{equation*}
N_2^{[2]}(x_1,x_2)=2x_2^2-x_2.
\end{equation*}
Finally all Newton polynomials will be updated by
\begin{equation}
\begin{aligned}
{}^*N_1^{[0]}(x_1,x_2)=1,\hspace{4cm}\\
{}^*N_1^{[1]}(x_1,x_2)=2x_1, \hspace{.4cm}{}^*N_2^{[1]}(x_1,x_2)=2{x_2},\hspace{2.5cm}\\
{}^*N_1^{[2]}(x_1,x_2)=2x_1^2-x_1,\hspace{.4cm} {}^*N_2^{[2]}(x_1,x_2)=4x_1x_2, \hspace{.4cm} {}^*N_3^{[2]}(x_1,x_2)=2x_2^2-x_2.\\
\end{aligned}
\end{equation}

\noindent \textbf{Example 2 (An incomplete Newton polynomial basis when $Y$ is not poised)} The problem is to do a quadratic interpolation 
on the interpolation set $Y=\left \{Y^{[0]}, Y^{[1]}, Y^{[2]} \right \}$, where
\begin{equation}
\begin{aligned}
Y^{[0]}=\left \{ (0,0) \right \},\hspace{1cm}\\
Y^{[1]}=\left \{  (1,0), (0,2)    \right \},\\
Y^{[2]}=\left \{   (3,0), (1,2), (2,1)   \right \}.
\end{aligned}
\end{equation}
We now apply Algorithm 9.4.1 in \cite{conn2000trust} to the initial quadratic polynomial basis
\begin{equation}
\beta=\left \{ 1, x_1, x_2, x_1^2, x_2^2, x_1x_2\right \}.
\end{equation}
We set
\begin{equation}
\begin{aligned}
N_1^{[0]}(x_1,x_2)=1,\hspace{4cm}\\
N_1^{[1]}(x_1,x_2)=x_1, \hspace{1cm}N_2^{[1]}(x_1,x_2)=x_2,\hspace{2cm}\\
N_1^{[2]}(x_1,x_2)=x_1^2,\hspace{1cm} N_2^{[2]}(x_1,x_2)=x_2^2, \hspace{1cm} N_3^{[2]}(x_1,x_2)=x_1x_2,\\
Y_{\text{temp}}=\left\{ \right\}.\hspace{3.6cm}
\end{aligned}
\end{equation}

\noindent \textbf{Calculating the Newton fundamental polynomial $i=0$ and $j=1$:} 

Since 
\begin{equation}
\begin{aligned}
N_1^{[0]}(0,0)=1,\hspace{3cm}\\
N_1^{[1]}(1,0)=1, \hspace{1cm}N_2^{[1]}(0,2)=2,\hspace{1cm}\\
N_1^{[2]}(3,0)=9,\hspace{1cm} N_2^{[2]}(1,2)=4, \hspace{1cm} N_3^{[2]}(2,1)=2,\\
\end{aligned}
\end{equation}
we can choose $y_j^{[i]} \in \left \{ (0,0), (1,0), (0,2), (3,0), (1,2), (2,1)\right \}$. We choose $y_1^{[0]}=(0,0)$, and set 
\begin{equation*}
Y_{\text{temp}}=\left \{(0,0) \right \}.
\end{equation*}
Then we normalize $N_1^{[0]}$,
\begin{equation*}
N_1^{[0]}(x_1,x_2)=1.
\end{equation*}
Newton polynomials are updated as
\begin{equation}
\begin{aligned}
{}^*N_1^{[0]}(x_1,x_2)=1,\hspace{4cm}\\
N_1^{[1]}(x_1,x_2)=x_1, \hspace{.4cm}N_2^{[1]}(x_1,x_2)={x_2},\hspace{2.5cm}\\
N_1^{[2]}(x_1,x_2)=x_1^2,\hspace{.4cm} N_2^{[2]}(x_1,x_2)=x_2^2, \hspace{.4cm} N_3^{[2]}(x_1,x_2)=x_1x_2.\\
\end{aligned}
\end{equation}

\noindent \textbf{Calculating the Newton fundamental polynomial $i=1$ and $j=1$:} 
\begin{equation}
\begin{aligned}
N_1^{[1]}(1,0)=1, \hspace{1cm}N_2^{[1]}(0,2)=2,\hspace{1cm}\\
N_1^{[2]}(3,0)=9,\hspace{1cm} N_2^{[2]}(1,2)=4, \hspace{1cm} N_3^{[2]}(2,1)=2,\\
\end{aligned}
\end{equation}
we can choose a $y_j^{[i]} \in \left \{  (1,0), (0,2), (3,0), (1,2), (2,1)\right \}$. We choose $y_1^{[1]}=(1,0)$, and set 
\begin{equation*}
Y_{\text{temp}}=\left \{(0,0), (1,0) \right \}.
\end{equation*}
Then we normalize $N_1^{[1]}$,
\begin{equation*}
N_1^{[1]}(x_1,x_2)=x_1.
\end{equation*}
Finally all Newton polynomials will be updated by
\begin{equation}
\begin{aligned}
{}^*N_1^{[0]}(x_1,x_2)=1,\hspace{4cm}\\
{}^*N_1^{[1]}(x_1,x_2)=x_1, \hspace{.4cm}N_2^{[1]}(x_1,x_2)={x_2},\hspace{2.5cm}\\
N_1^{[2]}(x_1,x_2)=x_1^2-x_1,\hspace{.4cm} N_2^{[2]}(x_1,x_2)=x_2^2, \hspace{.4cm} N_3^{[2]}(x_1,x_2)=x_1x_2.\\
\end{aligned}
\end{equation}

\noindent \textbf{Calculating the Newton fundamental polynomial $i=1$ and $j=2$:} 
\begin{equation}
\begin{aligned}
N_2^{[1]}(0,2)=2,\hspace{4cm}\\
N_1^{[2]}(3,0)=6,\hspace{1cm} N_2^{[2]}(1,2)=4, \hspace{1cm} N_3^{[2]}(2,1)=2,\\
\end{aligned}
\end{equation}
we can choose a $y_j^{[i]} \in \left \{  (0,2), (3,0), (1,2), (2,1)\right \}$. We choose $y_2^{[1]}=(0,2)$, and set 
\begin{equation*}
Y_{\text{temp}}=\left \{(0,0), (1,0), (0,2) \right \}.
\end{equation*}
Then we normalize $N_2^{[1]}$,
\begin{equation*}
N_1^{[1]}(x_1,x_2)=\frac{x_2}{2}.
\end{equation*}
Newton polynomials are updated as
\begin{equation}
\begin{aligned}
{}^*N_1^{[0]}(x_1,x_2)=1,\hspace{4cm}\\
{}^*N_1^{[1]}(x_1,x_2)=x_1, \hspace{.4cm}{}^*N_2^{[1]}(x_1,x_2)=\frac{x_2}{2},\hspace{2.5cm}\\
N_1^{[2]}(x_1,x_2)=x_1^2-x_1,\hspace{.4cm} N_2^{[2]}(x_1,x_2)=x_2^2-2x_2, \hspace{.4cm} N_3^{[2]}(x_1,x_2)=x_1x_2.\\
\end{aligned}
\end{equation}

\noindent \textbf{Calculating the Newton fundamental polynomial $i=2$ and $j=1$:} 
\begin{equation}
\begin{aligned}
N_1^{[2]}(3,0)=6,\hspace{1cm} N_2^{[2]}(1,2)=0, \hspace{1cm} N_3^{[2]}(2,1)=2,\\
\end{aligned}
\end{equation}
we can choose a $y_j^{[i]} \in \left \{  (3,0), (2,1)\right \}$. We choose $y_1^{[2]}=(3,0)$, and set 
\begin{equation*}
Y_{\text{temp}}=\left \{(0,0), (1,0), (0,2), (3,0) \right \}.
\end{equation*}
Then we normalize $N_1^{[2]}$,
\begin{equation*}
N_1^{[2]}(x_1,x_2)=\frac{x_1^2-x_1}{6}.
\end{equation*}
Newton polynomials are updated by
\begin{equation}
\begin{aligned}
{}^*N_1^{[0]}(x_1,x_2)=1,\hspace{4cm}\\
{}^*N_1^{[1]}(x_1,x_2)=x_1, \hspace{.4cm}{}^*N_2^{[1]}(x_1,x_2)=\frac{x_2}{2},\hspace{2.5cm}\\
{}^*N_1^{[2]}(x_1,x_2)=\frac{1}{6}(x_1^2-x_1),\hspace{.4cm} N_2^{[2]}(x_1,x_2)=x_2^2-2x_2, \hspace{.4cm} N_3^{[2]}(x_1,x_2)=x_1x_2.\\
\end{aligned}
\end{equation}

\noindent \textbf{Calculating the Newton fundamental polynomial $i=2$ and $j=3$:} 
\begin{equation}
\begin{aligned}
N_2^{[2]}(1,2)=0,\hspace{.4cm} N_3^{[2]}(2,1)=2,\\
\end{aligned}
\end{equation}
Then we normalize $N_3^{[2]}$,
\begin{equation*}
N_3^{[2]}(x_1,x_2)=\frac{1}{2}({x_1x_1}).
\end{equation*}
Finally all Newton polynomials will be updated by
\begin{equation}
\begin{aligned}
{}^*N_1^{[0]}(x_1,x_2)=1,\hspace{6cm}\\
{}^*N_1^{[1]}(x_1,x_2)=x_1, \hspace{.4cm}{}^*N_2^{[1]}(x_1,x_2)=\frac{x_2}{2},\hspace{4.5cm}\\
{}^*N_1^{[2]}(x_1,x_2)=\frac{1}{6}(x_1^2-x_1),\hspace{.4cm} N_2^{[2]}(x_1,x_2)=x_2^2-2x_2 \hspace{.4cm} {}^*N_3^{[2]}(x_1,x_2)=\frac{1}{2}({x_1x_1}).\\
\end{aligned}
\end{equation}

\noindent \textbf{Calculating the Newton fundamental polynomial $i=2$ and $j=2$:} 
\begin{equation}
\begin{aligned}
N_2^{[2]}(1,2)=0,\\
\end{aligned}
\end{equation}
which means the basis of Newton polynomials is incomplete, because 
\begin{equation}
N_2^{[2]}(y_2^{[2]})=0.\\
\end{equation}

Theorem 9.4.1 in \cite{conn2000trust} indicates that the model $m_k$ that is constructed based on the Newton fundamental polynomials, 
\begin{equation}\label{may22222}
m_k(x)=\sum_{i=0}^{d} \sum_{j=1}^{|Y^{[i]}|} \lambda_i(y_j^{[i]})N_j^{[i]}(x)
\end{equation}
for $ i= 0, 1, \dots, d$ and $j=1, \dots, |Y^{[i]}|$ is well defined and satisfies the interpolation initial conditions, where
\begin{equation}
\begin{aligned}
\lambda_0(x)=f(x) \hspace{2cm},\\
\lambda_{i+1}(x)=\lambda_{i}(x)-\sum_{j=1}^{|Y^{[i]}|} \lambda_i(y_j^{[i]})N_j^{[i]}(x),
\end{aligned}
\end{equation}
for  $ i= 0, 1, \dots, d-1$.

\noindent \textbf{Example 3 (Constructing a quadratic model using the Newton fundamental polynomials)} The problem is to construct a model $m(x)$ of the objective function 
\begin{equation}
f(x_1,x_2)=\left(x_1-2\right)^4+\left(x_2-1\right)^3+e^{x_1+x_2}
\end{equation}
on the interpolation set $Y=\left \{ (0,0), (1,0), (0,1), (2,0), (1,1), (0,2) \right \}$.
\begin{figure}[h!]
\centering
\includegraphics[width=\textwidth]{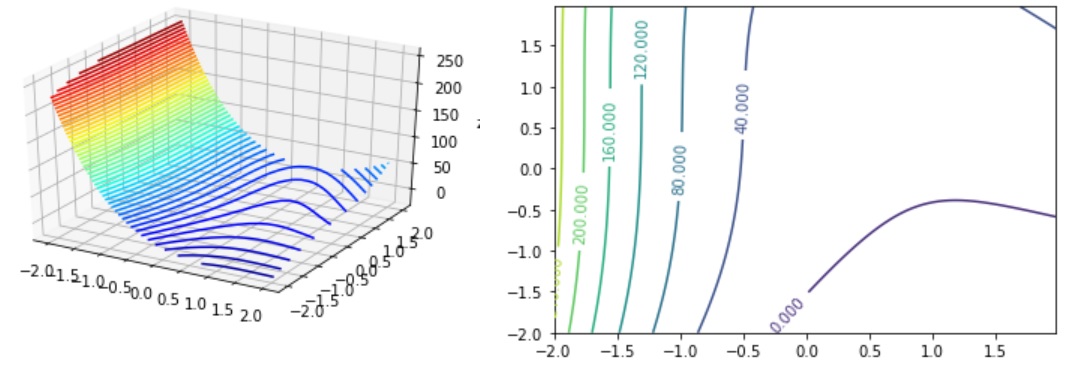}
\caption{\label{} Illustration of $f(x_1,x_2)=\left(x_1-2\right)^4+\left(x_2-1\right)^3+e^{x_1+x_2}$ and its level curves.}
\end{figure}
We organize the points of $Y$ into $Y=\left \{Y^{[0]}, Y^{[1]}, Y^{[2]} \right \}$, where
\begin{equation}
\begin{aligned}
Y^{[0]}=\left \{ (0,0) \right \},\hspace{1.2cm}\\
Y^{[1]}=\left \{  (1,0), (0,1)    \right \},\hspace{1cm}\\
Y^{[2]}=\left \{   (2,0), (1,1), (0,2)   \right \},
\end{aligned}
\end{equation}
and apply Algorithm 9.4.1 in \cite{conn2000trust} on 
\begin{equation}
\beta=\left \{ 1, x_1, x_2, x_1^2, x_1x_2, x_2^2\right \}
\end{equation}
to find the Newton fundamental polynomials
\begin{equation}
\begin{aligned}
N_1^{[0]}(x_1,x_2)=1,\hspace{5cm}\\
N_1^{[1]}(x_1,x_2)=x_1, \hspace{1cm}N_2^{[1]}(x_1,x_2)=x_2,\hspace{3cm}\\
N_1^{[2]}(x_1,x_2)=\frac{1}{2}(x_1^2-x_1),\hspace{.3cm} N_2^{[2]}(x_1,x_2)=x_1x_2, \hspace{.3cm} N_3^{[2]}(x_1,x_2)=\frac{1}{2}(x_2^2-x_2).\\
\end{aligned}
\end{equation}

By Theorem 9.4.1 in \cite{conn2000trust}, we have 
\begin{equation}
\begin{aligned}
\lambda_0 (x)=\left(x_1-2\right)^4+\left(x_2-1\right)^3+e^{x_1+x_2},\hspace{3.2cm}\\
\lambda_1 (x)=\left(x_1-2\right)^4+\left(x_2-1\right)^3+e^{x_1+x_2} - 16,\hspace{3cm}\\
\lambda_2 (x)=\left[\left(x_1-2\right)^4+\left(x_2-1\right)^3+e^{x_1+x_2} - 16\right]- \left[ \left(e-14 \right)x_1+\left(e+1 \right)x_2 \right]. \\
\end{aligned}
\end{equation}
By (\ref{may22222}),
\begin{equation}\label{}
\begin{aligned}
m(x)=&16+\left(e-16 \right)x_1+e x_2+\left(\frac{e^2-2e+11}{2}\right)\left(x_1^2-x_1\right)+\\
&\left(e^2-2e-2\right)\left(x_1x_2\right)+\left(\frac{e^2-2e+1}{2}\right)\left(x_2^2-x_2\right).
\end{aligned}
\end{equation}
Figure \ref{figg:may222} illustrates the obtained model and its level curves. Figure \ref{figg:may223} compares the objective function and its model level curves over $\left[-2,2\right] \times \left[-2,2\right]$.
\begin{figure}[h!]
\centering
\includegraphics[width=\textwidth]{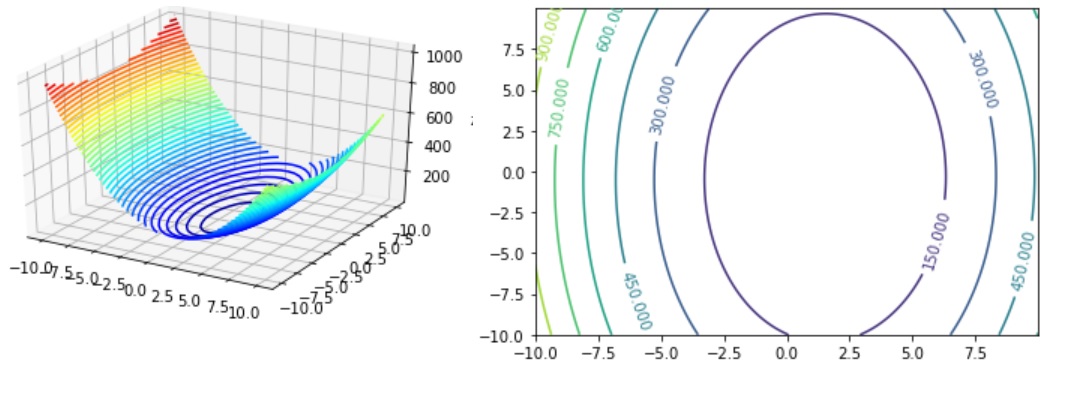}
\caption{\label{figg:may222} The quadratic model of $f(x_1,x_2)=\left(x_1-2\right)^4+\left(x_2-1\right)^3+e^{x_1+x_2}$ that is obtained by the Newton fundamental polynomials on the interpolation set $Y=\left \{ (0,0), (1,0), (0,1), (2,0), (1,1), (0,2) \right \}$.}
\end{figure}

\begin{figure}[h!]
\centering
\includegraphics[width=\textwidth]{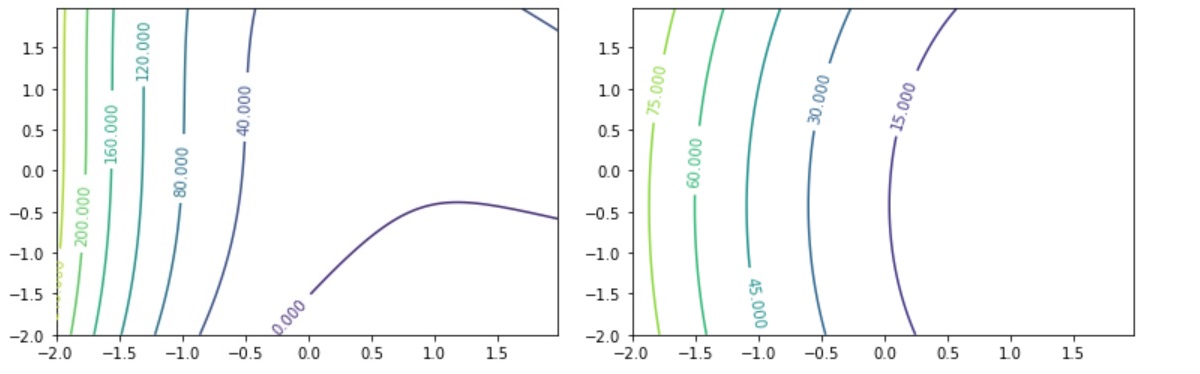}
\caption{\label{figg:may223} The level curves of $f(x_1,x_2)=\left(x_1-2\right)^4+\left(x_2-1\right)^3+e^{x_1+x_2}$ and its Newton model over $\left[-2,2\right] \times \left[-2,2\right]$}
\end{figure}

In practice, the threshold pivoting strategy is used to prevent $|N_ i^{[l]}(y^{j,[l+1])})|$ from becoming very large. 
At iterate $x^k$, the interpolation set $Y$ is said to be inadequate when $|D(Y)|$ exceeds a preassigned threshold by replacing $y_{\Delta}^i\in Y$ by any point $y$ inside the trust-region \cite{conn1997recent}, where
\begin{equation}
\begin{aligned}
\left \| x^k -y_{\Delta}^i\right \| \leq \Delta_k.
\end{aligned}
\end{equation}

If $\rho_k \not\geq \eta_1$, then we can attribute this result to one of two causes: either the interpolation set is inadequate or the trust region radius is too large. If the interpolation set $Y$ is inadequate, then a point $y^- \in Y$ is selected as 
\begin{equation}\label{ap121}
y^- =\text{arg} \max\limits_{y^i \in Y}  \left|L(y^i, y_{pr}^i)  \right |,
\end{equation}
where $y_{pr}^i$ is the potential replacement  of $y^i$ for all $y^i \in Y$, and is defined by
\begin{equation}\label{ap122}
y_{pr}^i=\text{arg} \max\limits_{\|y-x^k\| \leq \Delta_k} \left |L(y^i, y) \right |.
\end{equation}
If $\rho_k< \eta_1$ and the interpolation set $Y$ is adequate at $x^k$, then the trust-region radius should be reduced as
\begin{equation}
\Delta_{k+1}=\gamma_1 \Delta_k,
\end{equation}    
where $0<\gamma_1 < 1$. In fact, if the condition number of the system (\ref{MMarch21}) must be kept small, and the system (\ref{MMarch21}) must be as far away from singular as possible \cite{nocedal2006numerical}. 

The quadratic model assumption is computationally expensive, even if the model is updated and constructed based on the previous iteration model rather than constructing it from the scratch. It turns out that the number of required operations to update a quadratic model and calculate the corresponding step $s^k$ at every iteration is $O(n^4)$. The cost of each iteration might be reduced to $O(n^3)$ if the quadratic model is replaced by a linear model, which needs only $n+1$ interpolation points.

A neural network, which is made from different layers, is a beneficial tool for interpolation and function approximation. We now review the fundamental concepts of neural-networks, and in the next section we utilize this tool to propose a new model-based derivative-free trust-region method. A neural network contains an input layer, where the information enters the neural network; an output layer, where we can get the result out of the network, and a few hidden layers in between \cite{bishop2006pattern}. There are different types of Neural Network such as Feedforward Neural Network, Recurrent Neural Network which is basically used in Long Short Term Memory (LSTM) projects, Convolutional Neural Network, Radial basis function Neural Network, etc. In a general feed-forward network, each neuron activation $a_j$ is computed as a weighted sum of its inputs from the previous layer and it is then transformed by a activation function to returns $z_j$ as the output of the neuron. For instance if the input layer has $n$ variables $x_1, x_2, ..., x_{n}$, the hidden layer $j$ has $m_j$ neurons and the output layer has only one neuron, then the neuron ${k_1}$ in the first hidden layer is evaluated by
\begin{equation} \label{eq3}
z_{k_1}^{(1)}=\sigma_1(a_{k_1}^{(1)}),
\end{equation}
where $\sigma_1$ is a differentiable non linear function, which is called activation function,
\begin{equation} \label{eq2}
a_{k_1}^{(1)}= \sum_{i=1}^{{n}}{\omega_{{k_1}i}^{(1)} x_i},
\end{equation}
for ${k_1}=1,\dots, m_1$ and ${\omega_{{k_1}i}^{(1)}}$ is the weight from $x_i$ to neuron ${k_1}$ in the first hidden layer. If the neural-network contains $\ell$ hidden layers ($\ell+2$ layers), where hidden layer $\ell$ employs the differentiable activation function $h^{(\ell)}$, then the output of a feed forward neural-network is as follows:  
 \begin{equation}\label{eqqq4}
{NN}(W|x	)=h^{(\ell+1)} \left (\sum_{{k_{\ell}}=1}^{m_{\ell}} \left ( \omega_{1{k_{\ell}}}^{({\ell}+1)}h^{({\ell})}\left(...\left(\sum_{{k_2}=1}^{m_{2}} \omega _{{k_3}{k_2}}^{(3)}h^{(2)}\left ( \sum_{{k_1}=1}^{m_1}\left (\omega _{{k_2}{k_1}}^{(2)}h^{(1)}\left (\sum_{{k_0}=1}^{{m_0=n}}{\omega_{{k_1}{k_0}}^{(1)} x_{k_0}}\right)\right)\right)\right) \dots \right)\right) \right),
\end{equation}
where $W=(\omega_{11}^{(1)}, \dots, \omega_{m_{\ell} 1}^{({\ell}+1)})$, and for $r \geq 2$, $\omega_{{{\ell}}q}^{(r)}$ is the weight from $z_{q}^{(r-1)}$ to neuron ${{\ell}}$ in the $r^{\text{th}}$ hidden layer. We might select $h^{({\ell}+1)}$ to be identity function $I(x)=x$. Notice that (\ref{eq3}) is a ridge function, which is a function with the form as follows:
\begin{equation}
g(x,a,b)=\sigma (a^T x+b),
\end{equation}
where $\sigma$ is a nonlinear function, $a$ is the direction vector and $b$ is the bias. In this paper, all employed activation functions are continuously differentiable and can be considered as ridge functions.

A neural-network is a powerful tool for classification and regression. It can be employed to do an interpolation on a set $Y=\{y^1, y^2, \dots, y^p\}$ through minimization
of an appropriate loss function. Note that $y^j \in \mathbb{R}^n$ has $n$ components $y_1^j, y_2^j, \dots, y_n^j$. We might go through this regression procedure in two different approaches. The number of hidden layers and nodes of a neural-network are the hyper-parameters of the model, which means we must determine them in the beginning. The parameters in a neural-network that must be determined are the weights. The initial weights are often chosen arbitrarily.

Given the interpolation set $Y=\{y^1, y^2, \dots, y^p\}$, we might use all interpolation data points to find an loss function such as MSE. Then the loss function gets minimized to find the optimal weights.
\[
\min\limits_{W}  L(W)=L\left[\sum_{t=1}^{p} \left|f\left(y^{t}\right)-NN\left(W|y^{t}\right)\right|^q \right]^{\frac{1}{q}},
\]
where $y^{t} \in \mathbb{R}^n$ for all $t=1, 2, \dots, p$. However, one of the most important features of a good model is how well the trained model generalizes to new data. In other words, a neural-network is capable of making the error zero on the interpolation set, which means after the model is trained, all of the interpolation points lie on the model curve or surface. But, if we replace the current data point $y^-$ with a new data point $y^+$, then the trained model is no longer valid for the new interpolation set. Thus, a new model must be trained from scratch, which is often computationally expensive. So, generalization is one of the most important feature of a good model because the data that we sample is often incomplete and noisy. We aim to find a robust
model that does not overfit the interpolation set.

To help avoid overfitting, we split the interpolation set into two subsets, a training set $Y_{\text{Tr}}$ and a test set $Y_{\text{Te}}$. We train the model on $Y_{\text{Tr}}$ while $Y_{\text{Te}}$ is held back from the algorithm. After we have found the optimal weights of the neural-network on the training data set, we evaluate the trained model on the test set $Y_{\text{Te}}$ to find out how good the model might perform on unseen data points \cite{bishop2006pattern}.

The Universal Approximation Theorem (UAT), proven by Hornik \cite{hornik1991approximation}, shows that any continuous function $f \in C(\mathbb{R}^n)$ can be approximated with as few as a single hidden layer neural network under certain conditions with the input layer as the layer of random variables. Let $f:\Omega \subset \mathbb{R}^n \to \mathbb{R}$ be the function that we wish to approximate, and $m^{NN}$ be a model of $f$ that trained by a neural-network on $\Omega$ , where $\Omega$ is a compact subset of $\mathbb{R}^n$. The accuracy of approximation depends on how to measure closeness between a function and its corresponding model. The closeness is usually measured by the uniform distance between $f$ and $m^{NN}$ on domain $\Omega$:
\begin{equation}
\rho_{\mu,\Omega}=\sup\limits_{x \in \Omega} \left |f(x)-m^{NN}(x) \right |.
\end{equation}
The average performance with respect to the input environment measure $\mu$, where $\mu(R^k)<\infty$, is given as
\begin{equation}\label{m83}
\rho_{q,\mu}(f,m^{NN})=\left [\int_{R^k} \left |f(x)-m^{NN}(x) \right |^q d\mu(x) \right ]^{\frac{1}{q}},
\end{equation} and the choice corresponding to $q=2$, mean square error, is the most common used to measure the accuracy of the model \cite{hornik1991approximation}. Hashem et al. \cite{hashem1995improving} discussed how to improve the model accuracy by combining a set of trained neural networks if it is needed.

So a $\ell=1$ neural-network using sigmoidal activation functions can approximate a continuous function of a compact set in $\mathbb{R}^n$.  However, a $\ell=0$ is not capable of approximating a nonlinear continuous functions. A sigmoid function $\sigma(z)$ in $\mathbb{R}$ has the following properties:

\begin{equation}\label{m84}
\begin{aligned}
  \lim_{z \to -\infty}\sigma(z) &= 0, \\
    \lim_{z \to \infty}\sigma(z) &= 1, 
\end{aligned}
\end{equation}
and is defined as
\begin{equation}
\sigma(z)=\frac{1}{1- e^{-z}}.
\end{equation}
We approximate $f$ over $\Omega$ by splitting the domain $\Omega$ into a set of hypercubes $I_1, I_2, \dots, I_{\upsilon}$ and calculating
\begin{equation}\label{m71}
\hat{f}=\sum_{i=1}^{{\upsilon}}f(c_i)S_{I_i}(x),
\end{equation}
where $c_i$ is the center of $I_i$ and 
\begin{equation}\label{ap221}
S_{I_i}(x)=
\begin{cases}
                                   1  & \text{if } x\in I_i, \\
                                 0 & \text{otherwise}. 
 \end{cases}
\end{equation}
If $I_i=[a^i_1,b^i_1] \times [a^i_2,b^i_2] \times \dots \times [a^i_n,b^i_n]  \subset \mathbb{R}^n$ for $i=1, 2, \dots, \upsilon$, and 
\begin{equation}\label{ap222}
s(x)=
\begin{cases}
                                   1  & \text{if } x\geq 0, \\
                                 0 & \text{otherwise},
 \end{cases}
\end{equation}
then 
\begin{equation}
S_{I_i}(x)=S_{I_i}(x_1, x_2, \dots, x_n)=s \left( \left[\sum_{k=1}^{n}s\left(x^k-a^i_k \right)-s\left(x^k-b^i_k\right)\right]-n\right).
\end{equation}
We may express (\ref{m71}) in terms of a neural network with step activation function and two hidden layers. The input layer consists $x_1, x_2, \dots, x_n$, the first hidden layer consists of $2n$ neurons with step activation function, the second layer consists of $n$ linear activation function neurons, and one output neuron with step activation function. If $f$ is Lipschitz, then the error is $O(l)$, where the hypercubes $I_1, I_2, \dots, I_{\upsilon}$ have equal edges $l$ long \cite{zainuddin2008function}.

Therefore, we might approximate function $f$ on the compact set $\Omega$ by (\ref{m71}) where 
\begin{equation}
\Omega \approx \bigcup\limits_{i=1}^{{\upsilon}}I_i.
\end{equation}
Clearly, as $l \to 0$ and $\upsilon \to \infty$, then $\hat{f} \to f$. So a neural-network is capable of approximating any function on a compact subset, because if we define activation functions as (\ref{ap222}), then (\ref{ap221}) can be constructed by a net of connected neurons and $f$ can approximated by a neural-network. It turns out that the step function $s$ can be replaced by a sigmoid function $\sigma$ while the obtained result remains valid \cite{Geva}. Hecht et al. \cite{Hecht} prove that if $f: [0,1]^n \to \mathbb{R}$ is an arbitrary continuous function on $[0,1]^n=[0,1] \times [0,1] \times \dots \times [0,1]$, and $\sigma$ is the sigmoid function, then $f$ can be approximated by a three-layered (one hidden layer) feed forward neural-network. When we use sigmoid function, we are able to use its well-defined differentiability properties for using back-propagation and calculating the gradient and the Hessian of the model in this paper.

We can use the first and the second order derivatives obtained from a trained FNN to approximate the gradient of the model $g_k$ and the Hessian of the model $B_k$ in (\ref{e2}). If the activation functions of the trained FNNs are differentiable, the formulas to compute the first and the second order derivatives can be found in (\cite{hashem1992sensitivity}; \cite{bishop2006pattern}). The accuracy of a model obtained from an FNN can be improved by the multiresolution approach. That is we can simply add a few more neurons to the original neural network rather than building a completely new neural network \cite{csaji2001approximation}. Therefore, using neural network to approximate an appropriate model of the objective function within the trust-region (a compact subset of $\mathbb{R}^{n}$) may be computationally efficient. 

In the next section, we propose a new trust region method by employing deep neural network and using universal approximation theorem to maximum advantage. Throughout this paper, whenever we train a model by a neural network, we use differentiable activation and loss functions. Hence, the trained model $m_k^{NN}$ by a neural network is at least twice differentiable on $\mathbb{R}^{n}$. Moreover, the Hessian of the model can be estimated by back-propagation and remains bounded  on $\mathbb{R}^{n}$.

\section{ Neural Trust-region method}
In the rest of this paper, we consider an unconstrained optimization problem
\begin{equation}\label{apr293}
\min \limits_{x \in \mathbb{R}^n} f(x),
\end{equation}
where $f$ is locally Lipschitz continuous but it is possibly nonsmooth. We describe a new derivative-free trust-region algorithm in which a supervised machine learning technique is used to construct a robust model in trust-regions. Suppose at iteration $k$, the interpolation set $Y=\{y^1, y^2, \dots, y^p\}$ is given, and we are asked to find a good model of the objective function in
\begin{equation}\label{m81}
\Omega_k=\left \{x \in \mathbb{R}^{n} : \hspace{.2cm}\left \| x - x^k \right \| \leq \Delta_k \right \},
\end{equation}  
that satisfies (\ref{MMarch21}) with minimum error. We might use a classic interpolation method, such as direct methods or Lagrangian interpolation, to find a model satisfying (\ref{MMarch21}) for all $y^i \in Y$, then find the minimizer of the model in (\ref{m81}), and finally move to the next iterate. However, a low-order model e.g. linear or quadratic is often constructed because a high-order model has a high variance, which means if a new point $y^+$ is added to the interpolation set $Y$, then the current model does not remain a valid approximation of $f$ in the new trust-region and interpolation set, and the model must be trained from scratch. In the rest of this paper, we describe neural-network trust-region algorithms and address the aforementioned scenarios to some degree. 
\begin{assumption}\label{NTR assum1}
The objective function $f$ is Lipschitz continuous and bounded below on $\mathbb{R}^{n}$. 
\end{assumption}
\begin{assumption}\label{NTR assum2}
All activation functions and loss functions that are employed to build a model throgh a neural-network are at least twice differentiable. 
\end{assumption}
Assumption \ref{NTR assum1} guarantees that there exists a constant $\kappa_f$ such that for all $x \in \mathbb{R}^n$, $f(x) \geq \kappa_f.$ Assumption \ref{NTR assum2} guarantees that through backpropagation all derivatives of the trained model $\frac{\partial m_k}{\partial x_1}, \frac{\partial m_k}{\partial x_2}, \dots, \frac{\partial m_k}{\partial x_n}, \frac{\partial ^2 m_k}{\partial x_1^2}, \frac{\partial ^2 m_k}{\partial x_1\partial x_2} \dots$ are available. So the gradient and Hessian of the model can be constructed with a relatively insignificant cost automatically. 

Since the objective function might not be differentiable, the algorithm terminates at a Clarke stationary point \cite{Clarke}. Clarke generalized derivative of $f$ along direction $d$ is defined as 
\begin{equation}\label{apr291}
f^o (x;d)=\lim  \limits_{\substack{{y \to x}  \\ { \alpha \to 0}}} \text{sup}\frac{f(y+\alpha d)-f(y)}{\alpha},
\end{equation}
and the Clarke generalized gradient of $f$ at $x$ is defined as 
\begin{equation}\label{ap292}
\partial f(x)=\left  \{p\in \mathbb{R}^n : f^o (x;v) \geq v^Tp, \text{ for all } v \in \mathbb{R}^n \right\}.
\end{equation}
where $f$ is Lipschitz near $x$. From (\ref{apr291}) and (\ref{ap292}), we have
\begin{equation}
f^o (x;d)=\max \left \{d^Tp: p \in \partial f(x) \right \},
\end{equation}
and $x^*$ is said to be a Clarke stationary point for (\ref{apr293}) if 
\begin{equation}
f^o (x^*;d)\geq0,
\end{equation}
for all $d \in \mathbb{R}^n$ or, equivalently, if $0\in\partial f(x)$.In the following subsection, we describe how to employ a neural-network to solve an unconstrained minimization problem (\ref{apr293}).

\subsection{Neural trust-region using a quadratic model}\label{subsec:unsupervised}
Since the objective function $f$ derivative information are not available, we are not able to use Taylor-series theorem to construct a model of the function at each iteration. So, at every iterate $x^k$, a quadratic model $m^{NN}_k$ of the objective function within an appropriate trust-region centered $x^k$ is required to be constructed through a feed forward neural-network. We aim to make the most benefit from the neural-network backpropagation properties, and step toward solving the corresponding subproblem 
\begin{equation}
s^k=\arg \min\limits_{\| s \| \leq   \Delta_k} { m^{NN}_k,}
\end{equation}
through the neural-network rather than using Steihaug-Toint, GLTR, etc. The sensitivity analysis of neural-network models is thoroughly investigated in (\cite{hashem1992sensitivity}; \cite{davis1989sensitivity}), which can be used to see the effect of replacing a data point $y^-$ in the current interpolation set with a new point $y^+$ on the loss function and the corresponding subproblem solution. It turns out that a neural-network model is very robust when it comes to changing data points, especially if we change one at a time. It means, when we replace $y^-$ with $y^+$ in the interpolation set, the neural-network model is not required to be trained from scratch, which means it might be updated at every iterate with significantly lower cost.

We now describe how to construct a good quadratic model of the objective function and solve the corresponding subproblem in the current trust-region using a neural-network. Since for a given interpolation set $Y$, a neural-network model parameters are the weights of the connections, which we denoted as a weight matrix $W$, and the model as $m_k^{NN}(W|Y)$, $m_k^{NN}(W|x)$ or simply $m_k^{NN}(W)$. We need to find a neural-network weight matrix $W$ in order to build a quadratic model $m_k^{NN}$ that satisfies (\ref{MMarch21}) and is valid in
(\ref{m81}).
The trained model $m_k^{NN}$ is said to be valid in (\ref{m81}) if and only if for all points $x$ in the current trust-region and for some constant $\kappa$,
\begin{equation}\label{validity}
\left\|f(x)-m_k^{NN}(W^*,x)\right  \|\leq \kappa \Delta_k^2,
\end{equation}
where $W^*$ is the optimal weight matrix. Note that we may relax (\ref{MMarch21}) to some degree to allow the model $m_k^{NN}$ to satisfy (\ref{validity}). 

Since in this section, we only consider training a quadratic model, we only need to approximate the gradient and the Hessian of the objective function within the trust-region at every iteration. On the other hand, the gradient and the Hessian of an objective function can be written as the linear combination of their vector spaces bases
\begin{equation}
\begin{aligned}
  g &= \sum_{i=1}^{n} w^{g}_i e_i \\
  H &=\sum_{i<j} w^{H}_{ij} (E_{ij}+E_{ji})+\frac{1}{2}\sum_{i=1}^{n} w^{H}_{ii} E_{ii}, 
\end{aligned}
\end{equation}
where $e_i$ is the $i^{th}$ standard basis vector whose all its elements equal to zero except the $i^{\text{th}}$ that is equal to one, and $E_{ij}$ is the matrix standard basis matrix whose all its elements equal to zero except the $(i,j)$ that is equal to one.
A neural-trust-region method, which is a trust-region algorithm employing neural-network and deep learning to find a model within the trust region of every iterate, looks for the weights 
\[W=\left [w^{g}_1, w^{g}_2, \dots, w^{g}_n, w^{H}_{11}, w^{H}_{12}, \dots, w^{H}_{nn} \right]\]
to minimize mean-squared error,
\begin{equation}\label{m64}          
 L_1=MSE(W)=\bar{\zeta_1}\left[\frac{1}{p}\sum_{i=1}^{p}\left[m^{NN}_k\left(W,s|y_i)-f\left(y_i\right)\right]^2\right]^{\frac{1}{2}}\right]+\bar{\zeta_2} \left [\lambda_1-c \right]^2,
 \end{equation}
where 
\begin{equation}\label{e3}
m^{NN}_k (W,s|y_i ) = f(y^i) +\left (\sum_{i=1}^{n} w^{g}_i e_i \right )^T s +\dfrac{1}{2} s^T \left[\sum_{i<j} w^{H}_{ij} (E_{ij}+E_{ji})+\frac{1}{2}\sum_{i=1}^{n} w^{H}_{ii} E_{ii}\right]s,
\end{equation}
$\lambda_1$ is the smallest eigenvalue of $\nabla_{xx}m^{NN}_k$, which is available through backpropagation, $c>0$, $\bar{\zeta_1}\geq 0$ and $\bar{\zeta_2}\geq 0$. Note that we may choose  $\bar{\zeta_2}\geq\bar{\zeta_1}$ such that $\lambda_1>0$, which means that the trained quadratic model is convex. We may choose $ \bar{\zeta_2}=0$ and do not place any assumptions on the concavity of the model and leave it for trust-region boundary to enforce. But the loss function $L_1$ in (\ref{m64}) by itself is not a proper loss function for training the model because $s$ is unknown and has significant impact on model details. To alleviate this issue, we might define a new loss function $L_2$ for a neural-network that is constructed in series with first one. The first neural-network with loss $L_1$ is called the \textit{parent}, and the second neural-network with loss function $L_2$ is called \textit{child}. We call the whole neural-network as \textit{parent-child} net with a loss that is a linear combination of $L_1$ and $L_2$.

We now describe how to build the child neural-network with a proper loss function $L_2$. The trained model from the parent neural-network is $m_k^{NN}(s|W)$, which means the parameter $s$ is still unknown and should be determined. Note that we could minimize the parent trained model $m_k^{NN}(s|W)$ within the current trust-region by common methods such as Steihaug-Toint and GLTR if it is valid in the current trust-region to find $s$, which is the solution of the subproblem. But we can also use a child neural-network framework to find a step $s^k$ for the subproblem corresponding to $m_k^{NN}$ satisfying optimality conditions. A loss function might be simply defined to seek a pair $(s,w^*)$ satisfying the optimality conditions, 
\begin{equation}\label{m67}
 L_2=\zeta_1 \left [ \left(\nabla_{xx}m_k^{NN}+w^*I \right ) s+\nabla_x m_k^{NN} \right ]^2+\zeta_2 \left[w^*(\Delta_k-\|s\|)\right ]^2+\zeta_3 \left[\tilde{\lambda}_1-\tilde{c}\right]^2,
 \end{equation}
where $ \zeta_1\geq0,$ $ \zeta_2\geq0$ and $ \zeta_3\geq0$ are the weights for each term, $\tilde{\lambda}_1$ is the smallest eigenvalue of $\nabla_{xx}m_k^{NN}+w^*I$, $\tilde{c}\geq 0$ is a predetermined constant and 
\begin{equation}\label{m91}
               s=\sum_{i=1}^{n} w^{s}_i e_i.
 \end{equation}
Conn et al. \cite{CSV} (Theorem 10.1) showed that in a derivative-free trust-region method, 
\begin{equation}\label{emroozap1}
m_k(x^k)-m_k(x^k -t_k^C \nabla m_k(x^k)) \geq \frac{1}{2}\|\nabla m_k(x^k)\| \min \left\{\frac{\left\|\nabla m_k(x^k) \right \|}{\|\nabla_{xx} m_k(x^k)\|}, \Delta_k \right\},
\end{equation}
where 
\begin{equation}
t_k^C =\text{arg} \min \limits_{t>0}m_k \left(x^k-t \nabla m_k(x^k) \right), \text{ \hspace{.2cm} and \hspace{.2cm}} x^k-t \nabla m_k(x^k) \in \Omega_k.
\end{equation}
We aim to train a model whose corresponding subproblem solution $s^k$ satisfies 
\begin{equation}
m^{NN}_k(x^k)-m^{NN}_k(x^k+s^k)\geq c^{*} \left[m_k(x^k)-m_k(x^k -t_k^C \nabla m_k(x^k)) \right].
\end{equation}
for some $c^{*}>0$ and for all $k$. Hence, we might add another term to our loss function as 
\begin{equation}\label{NTR-Cauchy}
L_3=\left [\left [m^{NN}_k(x^k)-m^{NN}_k\left(x^k+s^k\right)\right] -c^{*} \left [m_k\left(x^k\right)-m_k\left(x^k -t_k^C \nabla m_k\left(x^k\right)\right)\right ]\right]^2-\tilde{c},
\end{equation}
for some ${\tilde{c}} \geq 0$.
Note that in (\ref{m67}) the second term satisfies the complementary condition and the third term satisfies the curvature condition. Therefore, we may build a parent-child net to find the step $s^k$ of the subproblem by minimizing 
 \begin{equation}\label{erm61}
             \text{Overall-Loss}(\hat{W})=\tilde{\zeta}_1 L_1+\tilde{\zeta}_2 L_2+\tilde{\zeta}_3 L_3,	
 \end{equation}
where 
\begin{equation}
\hat{W}=\left[w^{g}_1, w^{g}_2, \dots, w^{g}_n, w^{H}_{11}, w^{H}_{12}, \dots, w^{H}_{nn}, w^{s}_1, w^{s}_2, \dots, w^{s}_n, w^* \right],
\end{equation}
$\tilde{\zeta}_1\geq0, \tilde{\zeta}_2,\tilde{\zeta}_3\geq0$ are adjustable hyper-parameter for the loss function. Note that if the Hessian of a trained model in (\ref{m64}) turns out to be positive semidefinite, then the weight $w^*$ in (\ref{m67}) becomes zero. Moreover, $w^{s}_i$'s and $w^*$ are dependent on $w^{g}_i$'s and $w^{H}_{ij}$'s for $1\leq i\leq j\leq n$. 

We might assume the Cauchy reduction (\ref{NTR-Cauchy}) is always obtained through the neural-network trust-region algorithm and then convert the loss function (\ref{erm61}) to 
\begin{equation}\label{erm611}
             \text{Overall-Loss}(\hat{W})=\tilde{\zeta}_1 L_1+\tilde{\zeta}_2 L_2.
 \end{equation}

After the algorithm trains a valid model $m_k^{NN}$ and calculates the minimizer of the corresponding subproblem, it updates the iterate and the interpolation set. Since we initially assumed that a model of the objective function must be quadratic, the bias of the trained model is relatively high but the variance might be relatively low. 

\begin{algorithm}[H]
            \caption{Neural-trust-region algorithm based on a smooth quadratic model} 
            \label{alg:ALG1}
            \begin{flushleft}
                \textbf{Step 0: Initialization.} 
                 An initial point $x_0$ and an initial trust-region radius $\Delta_0>0$ are given. The constants  $0\leq \eta_1 \leq1$, $0<\gamma_1  <\gamma_2 \leq 1$ and $\epsilon>0$. Compute $f(x_0)$ and set $k=0$.\\
             \end{flushleft}    
             
             \begin{flushleft}
                \textbf{Step 1: Model definition and Step calculation.}  Given the interpolation set $Y=\{y^1,y^2, \dots, y^{p}\}$ within the current trust-region centered $x^k$ with radius $\Delta_k$, calculate step $s^k$ that sufficiently reduces the model by training a valid model $m_k^{NN}$ in the current trust-region and minimizing loss (\ref{erm61}) of a neural network.
                 
             \end{flushleft}     
             
             \begin{flushleft}
                \textbf{Step 2: Acceptance of trial point.} Compute $f(x^k+s^k)$ and 
                 $$\rho_k = \frac{f(x^k) - f( x^k + s^k )}{ m_k ( x^k )-m_k( x^k+s^k )}.$$
   \hspace{1.2cm}           If $\rho_k\geq \eta_1$, then define $x^{k+1}=x^k+s^k$; otherwise define $x^{k+1}=x^{k}$.

             \end{flushleft}    
             
             \begin{flushleft}
                \textbf{Step 3: Trust-region radius and training set update (see \cite{conn1997convergence} and \cite{conn1997recent}).} 
                
  \hspace{1.2cm}           If $\rho_k\geq \eta_1$, then $\Delta_{k+1}=\gamma_2 \Delta_k$, select the exiting point $y^-$ by 
 \begin{equation}\label{y-}
y^-  =\text{arg} \max\limits_{y^i \in Y} L(x^+, y^i),
\end{equation}
 \hspace{1.4cm} where $L(.,.)$ is the Lagrangian function defined in (\ref{ap112}), and replace $y^- \in Y$ with $x^+=x^k+s^k$. 
 
  \hspace{1.2cm}          If $\rho_k< \eta_1$ \textit{and} the interpolation set $Y$ is inadequate, then for every $y^i \in Y$ find $y_{pr}^i$ by (\ref{ap122}). Then select the 
 
 \hspace{1.2cm}  exiting point $y^-$ by (\ref{ap121}). Finally, replace $y^-$ with a point $y^+$ in the trust-region to improve $D(Y)$ in (\ref{ap123}).
 
  \hspace{1.2cm}           If $\rho_k< \eta_1$ \textit{but} the interpolation set $Y$ is adequate, then $\Delta_{k+1}=\gamma_1 \Delta_k$.

\hspace{.2cm}Increment $k$ by $1$ and go to Step 1.
             \end{flushleft}        
    \end{algorithm}
The loss functions $L_1$ and $L_2$ in (\ref{m64}) and (\ref{m67}) might be replaced by more efficient loss functions. The interpolation set $Y$ might be sampled in a way that we obtain a balanced loss function on the boundary points and inner points of the trust-region while it is kept to be adequate. In this case the model represents the objective function more appropriately on the boundary of the trust-region.

\subsection{Convergence analysis of Algorithm \ref{alg:ALG1}}
According to the universal approximation theorem, any continuous function $f$ can be approximated by a $\ell \geq 1$ feed-forward neural network as good as we wish. In other words, for a given $\epsilon >0$, there exists $h \in \mathbb{N}$ hidden neurons such that 
\begin{equation}
\|f(x)-m_k^{NN}(x)\|_2\leq \epsilon,
\end{equation}
where $m_k^{NN}$ is the trained neural-network model with $M$ hidden neurons. So we can have the following assumption throughout this paper, which means the neural-network model is valid.
\begin{assumption}\label{NTR:assum3}
For a given model $m_k$, there exists $\kappa>0$ such that for all points $x\in\Omega_k$ and for all $k$, 
\begin{equation}
\left \|f(x)-m_k^{NN}(x) \right \|_2\leq \kappa \Delta_k^2.
\end{equation}
\end{assumption}

\begin{lemma}\label{NTR:lemma1}
For all $k$, the trained model $m_k^{NN}(x)$ in trust-region (\ref{m81}) is at least twice differentiable with respect to $x$.
\end{lemma}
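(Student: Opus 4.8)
The plan is to read the claim directly off the closed-form expression (\ref{eqqq4}) for a feed-forward network and a two-fold application of the chain rule. Once the training phase of Step~1 has returned the optimal weights $W^{*}$, the map $x\mapsto m_k^{NN}(W^{*}\mid x)=\mathrm{NN}(W^{*}\mid x)$ is a \emph{finite} composition of affine maps (the pre-activations $a^{(r)}=W^{(r)}z^{(r-1)}$, each of which is $C^{\infty}$) interleaved with the scalar activation functions $h^{(r)}$ applied coordinatewise. By Assumption~\ref{NTR assum2} every $h^{(r)}$ is at least twice continuously differentiable on $\mathbb{R}$, and a finite composition of $C^{2}$ maps is again $C^{2}$, as are finite linear combinations of $C^{2}$ functions; hence $m_k^{NN}$ is $C^{2}$ on all of $\mathbb{R}^{n}$, in particular on the trust region $\Omega_k$ of (\ref{m81}).

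I would make this precise by induction on the layer index. Set $z^{(0)}(x)=x$, which is $C^{\infty}$, and for $r\ge 1$ put $a^{(r)}(x)=W^{(r)}z^{(r-1)}(x)$ and $z^{(r)}(x)=h^{(r)}\!\big(a^{(r)}(x)\big)$ coordinatewise. If $z^{(r-1)}$ is $C^{2}$, then $a^{(r)}$ is $C^{2}$ as a linear image of a $C^{2}$ map, and each component of $z^{(r)}$ is the composition of the $C^{2}$ function $h^{(r)}$ with the $C^{2}$ function $a^{(r)}$, hence $C^{2}$. After $\ell+1$ steps we obtain $m_k^{NN}=z^{(\ell+1)}\in C^{2}$; the choice $h^{(\ell+1)}=I$ permitted by the paper is just the special case of a (trivially smooth) identity output activation. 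For the quadratic-model specialization of Section~\ref{subsec:unsupervised}, the statement is even more immediate: $m_k^{NN}(W^{*},s\mid y_i)$ in (\ref{e3}) is a genuine second-degree polynomial in $s$ with coefficients given by the trained weights, hence $C^{\infty}$ in $s$, equivalently in $x=x^{k}+s$.

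The main point is that there is essentially no analytic obstacle here: the content is bookkeeping with the chain rule, and the regularity is dictated solely by the network architecture together with Assumption~\ref{NTR assum2}. The one thing worth flagging is a potential red herring: the auxiliary quantities appearing in the \emph{loss} functions — e.g.\ the smallest eigenvalue $\lambda_1$ of $\nabla_{xx}m_k^{NN}$ in (\ref{m64}) or $\|s\|$ in (\ref{m67}) — need not depend smoothly on the weights, but this is irrelevant to the lemma, which concerns the regularity of the trained model as a function of the spatial variable $x$ with $W^{*}$ held fixed; training merely selects $W^{*}$ and does not alter the $x$-dependence. With $C^{2}$ regularity in hand, $\nabla_x m_k^{NN}$ and $\nabla_{xx}m_k^{NN}$ exist everywhere and coincide with the quantities produced by back-propagation, as already observed after Assumption~\ref{NTR assum2}.
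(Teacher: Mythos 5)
Your proposal is correct and follows essentially the same route as the paper's own proof: both argue that $m_k^{NN}$ is a finite composition of affine maps with activation functions that are at least twice continuously differentiable by Assumption~\ref{NTR assum2}, hence $C^2$ in $x$. Your version merely makes the layer-by-layer induction and the irrelevance of non-smooth loss terms explicit, which the paper leaves implicit.
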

\begin{proof}
By Assumption \ref{NTR assum2}, all discussed activation functions and loss functions that neural-trust-region algorithms use are at least twice continuously differentiable. Moreover, the model $m_k^{NN}$ is a composition of continuously differentiable functions similar to (\ref{eqqq4}). Thus, the model $m_k^{NN}$ is at least twice continuously differentiable and its derivatives can be obtained automatically through back propagation.

\end{proof}

\begin{lemma}\label{NTR:lemma2}
For all $k$, the Hessian of the trained model $m_k^{NN}$ in trust-region (\ref{m81}) remains bounded; that is for all $x$ in the current trust-region and for some constant $\kappa_{hm}$, 
\begin{equation*}
\left\| \nabla_{xx} m_k^{NN} \right \| \leq \kappa_{hm}.
\end{equation*}

\end{lemma}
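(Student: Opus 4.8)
The plan is to observe that, within Algorithm~\ref{alg:ALG1}, the surrogate (\ref{e3}) is purely quadratic, so its Hessian is not a function of $x$ at all: differentiating (\ref{e3}) twice gives
\[
\nabla_{xx} m_k^{NN} \;=\; B_k \;:=\; \sum_{i<j} w^{H}_{ij}\,(E_{ij}+E_{ji}) \;+\; \tfrac{1}{2}\sum_{i=1}^{n} w^{H}_{ii}\,E_{ii},
\]
a fixed symmetric matrix assembled from the weights returned by the loss minimization. Since $\|E_{ij}+E_{ji}\| = \|E_{ii}\| = 1$, we have $\|B_k\| \le \sum_{i\le j}|w^{H}_{ij}|$, so the statement reduces to bounding the trained Hessian weights, uniformly in $k$.

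First I would extract a bound on $\|B_k\|$ from the validity of the model. Fix $k$, let $(\mu,v)$ be an eigenpair of $B_k$ with $\|v\|=1$, and note $x^k\pm\Delta_k v\in\Omega_k$. Because $m_k^{NN}$ is quadratic, a centered second difference cancels the gradient term: $\tfrac{1}{2}\big[m_k^{NN}(x^k+\Delta_k v)+m_k^{NN}(x^k-\Delta_k v)\big]-m_k^{NN}(x^k)=\tfrac{1}{2}\Delta_k^2\mu$. Replacing each value of $m_k^{NN}$ by $f$ plus the model error, applying Assumption~\ref{NTR:assum3} to the three error terms and Lipschitz continuity of $f$ (Assumption~\ref{NTR assum1}, with constant $L_f$) to the two increments of $f$, I obtain $\tfrac{1}{2}\Delta_k^2|\mu|\le 2\kappa\Delta_k^2+L_f\Delta_k$, hence $\|B_k\|=\max_\mu|\mu|\le 4\kappa+2L_f/\Delta_k$.

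The main obstacle is turning this into a constant independent of $k$, since the $1/\Delta_k$ term is genuinely present when $f$ is only Lipschitz. In Algorithm~\ref{alg:ALG1} every radius update multiplies $\Delta_k$ by $\gamma_1<1$ or by $\gamma_2\le1$, so the radii form a non-increasing sequence in $(0,\Delta_0]$; what is still needed is a positive lower bound $\Delta_{\min}$ on the radii the algorithm actually reaches, which is supplied by the termination mechanism built around $\epsilon$ in Step~0. With such a bound, $\kappa_{hm}:=4\kappa+2L_f/\Delta_{\min}$ proves the lemma in the quadratic case. For a genuinely nonlinear neural model one would instead differentiate the composition (\ref{eqqq4}) twice, writing $\nabla_{xx}m_k^{NN}$ as a finite sum of products of network weights and of first and second derivatives of the activation functions, the latter being globally bounded for the sigmoidal activations admitted by Assumption~\ref{NTR assum2}; so in either case the lemma ultimately rests on the trained weights --- equivalently, the minimizers of the loss problems --- remaining in a bounded set as $k$ varies, and establishing that uniform boundedness is the crux.
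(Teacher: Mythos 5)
Your main argument has a genuine gap. The bound you extract from the centered second difference is $\|B_k\|\le 4\kappa + 2L_f/\Delta_k$, and you correctly observe that this becomes a uniform constant only if the radii are bounded away from zero. But no such lower bound is available: the $\epsilon$ introduced in Step 0 of Algorithm \ref{alg:ALG1} is never actually invoked as a stopping test anywhere in the algorithm, and the paper's own Lemma \ref{radiustozero} proves that $\Delta_k \to 0$ whenever the algorithm does not terminate finitely --- which is precisely the case in which a uniform $\kappa_{hm}$ is needed for the convergence analysis. So the route through Assumption \ref{NTR:assum3} plus Lipschitz continuity of $f$ cannot close: when $f$ is merely Lipschitz, a model that matches $f$ to accuracy $O(\Delta_k^2)$ on a ball of radius $\Delta_k$ can legitimately carry curvature of size $1/\Delta_k$, so the validity assumption is too weak to control the Hessian uniformly in $k$. (In the finite-termination case a maximum over finitely many iterations works, but then the lemma does no work.)

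The paper's actual proof is the one you sketch in your closing sentences and treat as secondary: it writes $m_k^{NN}$ explicitly as a one-hidden-layer sigmoid network, differentiates twice, and bounds each $\partial^2 m_k^{NN}/\partial x_r\partial x_s$ by a finite sum of products of weights with $\sigma$ and $1-\sigma$, both of which lie in $(0,1)$. The entire burden then falls on the trained weights $w^{(1)}_{ij}$, $w^{(2)}_{j}$ remaining in a bounded set uniformly in $k$ --- which the paper asserts (``due to Assumption \ref{NTR assum1}, the structure of a neural-network and the definition of loss function'') but does not prove. You diagnosed that crux correctly; had you made the explicit-differentiation argument your main line rather than the second-difference computation, you would have reproduced the paper's proof, including its unproved step.
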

\begin{proof}
Without loss of generality, suppose we train a model $m_k^{NN}$ of $f:\mathbb{R}^n \to \mathbb{R}$ by employing a neural-network with one hidden layer and $h$ hidden neurons on the interpolation set $Y=\{ y^1, y^2, \dots, y^p\}$. Hence, the trained model has the form
\begin{equation}
m_{k}^{NN}(x_1,x_2, \dots, x_n)=\sum_{j=1}^{h} w_j^{(2)}\sigma(\sum_{i=1}^{n}w_{ij}^{(1)}x_i),
\end{equation}
where $x=(x_1,x_2, \dots, x_n)$, $\sigma$ is the sigmoid function, $w_{ij}^{(1)}$ is the weight of the connection that goes form the input $x_i$ to the $j^{th}$ hidden neuron, and $w_j^{(2)}$ is the weight of the connection that goes to the output neuron form the $j^{th}$ neuron. Through back propagation, the model derivatives
\begin{equation}\label{senkr}
\frac{\partial{m_{k}^{NN}}}{\partial x_r}=\sum_{j=1}^{h} w_j^{(2)}w_{rj}^{(1)} \sigma(\sum_{i=1}^{n}w_{ij}^{(1)}x_i) -w_j^{(2)}w_{rj}^{(1)} (\sigma(\sum_{i=1}^{n}w_{ij}^{(1)}x_i))^2,
\end{equation}
and
\begin{equation}
\begin{aligned}
\frac{\partial^2{m_{k}^{NN}}}{\partial x_r \partial x_s}=&\sum_{j=1}^{h} \left[w_j^{(2)}w_{rj}^{(1)}w_{sj}^{(1)} \sigma(\sum_{i=1}^{n}w_{ij}^{(1)}x_i)(1-\sigma(\sum_{i=1}^{n}w_{ij}^{(1)}x_i)) \right]-\\
& 2\sum_{j=1}^{h} \left[w_j^{(2)}w_{rj}^{(1)}w_{sj}^{(1)} (\sigma(\sum_{i=1}^{n}w_{ij}^{(1)}x_i))^2(1-\sigma(\sum_{i=1}^{n}w_{ij}^{(1)}x_i)) \right],
\end{aligned}
\end{equation}
for all $1 \leq r \leq s \leq n$, can be obtained with relatively low cost. Due to Assumption \ref{NTR assum1}, the structure of a neural-network and the definition of loss function, we know $w_{ij}^{(\ell)}<\infty$ for all $i=1, 2, \dots, n$, $j=1, 2, \dots, h$ and $ \ell=1, 2$. On the other hand, since $0< \sigma(x) <1$ and $h< \infty$, there exists a constant $\kappa_{rs}$ such that
\begin{equation}\label{ap241}
\left|\frac{\partial^2{m_{k}^{NN}}}{\partial x_r \partial x_s}\right| <\kappa_{rs}.
\end{equation}
Thus, (\ref{ap241}) simply implies that there exists a constant $\kappa_{hm}$ such that 
\begin{equation*}
\left\| \nabla_{xx} m_k^{NN} \right \| \leq \kappa_{hm}.
\end{equation*}

\end{proof}

\begin{lemma}\label{lemma_upperbound_for_sd}
Suppose the loss function given by (\ref{erm61}) is employed to train a neural-network model $m_k^{NN}$. Then for all $k$ and for some $\Lambda \geq 0$, the subproblem solution satisfies
\begin{equation}
(s^k)^T\nabla_x m_k^{NN}(x^k) \leq -\Lambda \|s^k\|^2.
\end{equation}
\end{lemma}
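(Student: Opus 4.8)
The plan is to read the inequality off the first-order stationarity and curvature conditions that the child loss $L_2$ in (\ref{m67}) is built to enforce, rather than from any geometric property of the trust-region minimizer directly. First I would note that $s^k$ is produced by minimizing the overall loss (\ref{erm61}), which contains $L_2$; invoking Assumption \ref{NTR:assum3} and the universal approximation theorem, the parent-child network can be taken to drive each nonnegative term of $L_2$ to zero, so the trained pair $(s^k,w^*)$ satisfies
\begin{equation*}
\left(\nabla_{xx} m_k^{NN}(x^k)+w^* I\right)s^k+\nabla_x m_k^{NN}(x^k)=0
\end{equation*}
from the first term of (\ref{m67}), and $\tilde{\lambda}_1=\tilde{c}$ from the third term, where $\tilde{\lambda}_1$ is the smallest eigenvalue of $\nabla_{xx} m_k^{NN}(x^k)+w^* I$. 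Since this matrix is symmetric with smallest eigenvalue $\tilde{c}\geq 0$, it is positive semidefinite and $\nabla_{xx} m_k^{NN}(x^k)+w^* I\succeq \tilde{c} I$.

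Next I would solve the stationarity relation for the gradient and left-multiply by $(s^k)^T$, which gives
\begin{equation*}
(s^k)^T\nabla_x m_k^{NN}(x^k)=-(s^k)^T\left(\nabla_{xx} m_k^{NN}(x^k)+w^* I\right)s^k\leq -\tilde{c}\,\|s^k\|^2 .
\end{equation*}
Because $\tilde{c}$ is a predetermined constant independent of $k$, setting $\Lambda=\tilde{c}\geq 0$ yields the claimed bound uniformly in $k$. (Alternatively, if the concavity penalty in $L_1$, the term $[\lambda_1-c]^2$ of (\ref{m64}), is the one kept active, the same computation with $w^*\geq 0$ and $\nabla_{xx} m_k^{NN}(x^k)\succeq c I$ gives $\Lambda=c$.)

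The main obstacle is the very first step: the clean identity presumes the child network attains $L_2=0$ exactly, while in practice training returns only an approximate minimizer. I would address this by either appealing to Assumption \ref{NTR:assum3} and the universal approximation theorem to make the residuals in the three penalties of (\ref{m67}) arbitrarily small, or, more conservatively, by carrying a residual $r^k$ with $\left(\nabla_{xx} m_k^{NN}(x^k)+w^* I\right)s^k+\nabla_x m_k^{NN}(x^k)=r^k$ and absorbing the $\|r^k\|\,\|s^k\|$ term; as long as the curvature penalty keeps $\tilde{\lambda}_1\geq 0$ and $\|r^k\|$ is dominated accordingly, the inequality survives, with $\Lambda=0$ in the worst case, which still satisfies the statement. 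A minor point worth recording is that no sign hypothesis on $w^*$ is needed, since positive semidefiniteness of $\nabla_{xx} m_k^{NN}(x^k)+w^* I$ comes entirely from the curvature term; this is why the argument is indifferent to whether the trust-region constraint is active.
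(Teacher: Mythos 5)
Your proposal is correct and follows essentially the same route as the paper: assume the child network drives the residual of $L_2$ to zero, read off the stationarity equation $(\nabla_{xx}m_k^{NN}+w^*I)s^k=-\nabla_x m_k^{NN}$ together with positive semidefiniteness of $\nabla_{xx}m_k^{NN}+w^*I$, and left-multiply by $(s^k)^T$. The one place your write-up is actually tighter than the paper's is the choice of constant: by anchoring the eigenvalue bound to the predetermined $\tilde c$ from the third penalty term of (\ref{m67}) you obtain a $\Lambda$ genuinely independent of $k$ (which the statement ``for all $k$ and for some $\Lambda$'' requires), whereas the paper sets $\Lambda=\min\{\lambda_1,\tilde\lambda_1\}$ using eigenvalues computed at a single iteration and never addresses uniformity over $k$; you also correctly note that the paper's case split on $w^*=0$ versus $w^*>0$ is unnecessary, and your explicit acknowledgment that exact zero residual is an idealization is a gap the paper shares but does not flag.
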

\begin{proof}
Neural Network methods guarantee that we are able to train a model $m_k^{NN}$ such that the corresponding subproblem solution $s^k$ results in zero
residual $L_2$ (\ref{m67}). So there exists a constant $w^*\geq $ such that 
\begin{center}
$(\nabla_{xx}m_k^{NN}+w^*I)s^k+\nabla_{x}m_k^{NN}=0,$ \\
$\nabla_{xx}m_k^{NN}+w^*I$ \text{ is positive semidefinite, and}\\
$w^*(\Delta_k-\|s^k\|)=0. $\hspace{1cm}
\end{center}
Note that $\nabla_{xx}m_k^{NN}+w^*I$ positive semidefinite because the process builds $\nabla_{xx}m_k^{NN}$ as positive semidefinite.
If $w^*=0$, then 
\begin{equation}
\nabla_{xx}m_k^{NN}(x^k)s^k=-\nabla_{x}m_k^{NN}(x^k),
\end{equation}
and  since $\nabla_{xx}m_k^{NN}$ is positive semidefinite, we have 

\begin{equation}
(s^k)^T\nabla_{x}m_k^{NN}(x^k)=-(s^k)^T\nabla_{xx}m_k^{NN}(x^k)s^k \leq -\lambda_1 \|s^k\|^2,
\end{equation}
where $\lambda_1$ is the smallest egienvalue of $\nabla_{x}m_k^{NN}(x^k)$.

If $w^*> 0$, then
\begin{center}
$\Delta_k=\|s^k\|,$\\
$(\nabla_{xx}m_k^{NN}(x^k)+w^*I)s^k=-\nabla_{x}m_k^{NN},$
\end{center}
and 
\begin{equation}
(s^k)^T\nabla_{x}m_k^{NN}(x^k)=-(s^k)^T(\nabla_{xx}m_k^{NN}+w^*I)(x^k)s^k \leq -\tilde{\lambda}_1 \|s^k\|^2,
\end{equation}
where $\tilde{\lambda}_1$ is the smallest eigenvalue of $(\nabla_{xx}m_k^{NN}+w^*I)(x^k)$. Letting $\Lambda= \min \{{\lambda}_1,\tilde{\lambda}_1\}$, then for all $k$, we have
\begin{equation}
(s^k)^T\nabla_{x}m_k^{NN}(x^k) \leq -\Lambda \|s^k\|^2.
\end{equation}
\end{proof}

\begin{theorem} \label{NTR-Theorem1}
Suppose a quadratic model and loss function, found at each iteration of Algorithm 1, are used to train a $\ell\geq 1$ neural-network model.  Then for all $k$, there exists $\beta >0$ so that
\begin{equation}
m^{NN}_k(x^k)-m^{NN}_k(x^k+s^k)\geq \beta \|s^k\|^2
\end{equation}
\end{theorem}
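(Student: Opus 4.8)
The plan is to exploit that $m^{NN}_k$ is an \emph{exact} quadratic in the step variable together with the first‑order optimality relations that the child network enforces on $s^k$ — relations that were already isolated inside the proof of Lemma~\ref{lemma_upperbound_for_sd} — and then to reduce the claimed bound directly to that lemma.

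First I would write the model reduction in closed form. Writing $g_k=\nabla_x m^{NN}_k(x^k)$ and $B_k=\nabla_{xx}m^{NN}_k$, and using $m^{NN}_k(x^k+s)=f(x^k)+g_k^{T}s+\tfrac12 s^{T}B_k s$ with $m^{NN}_k(x^k)=f(x^k)$, we get
\begin{equation*}
m^{NN}_k(x^k)-m^{NN}_k(x^k+s^k)=-g_k^{T}s^k-\tfrac12 (s^k)^{T}B_k s^k .
\end{equation*}
Next I would recall from the proof of Lemma~\ref{lemma_upperbound_for_sd} that there is a scalar $w^{*}\ge 0$ with $(B_k+w^{*}I)s^k=-g_k$, with $B_k+w^{*}I$ positive semidefinite, and with $w^{*}\bigl(\Delta_k-\|s^k\|\bigr)=0$. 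Substituting $g_k=-(B_k+w^{*}I)s^k$ into the previous display and simplifying yields
\begin{equation*}
m^{NN}_k(x^k)-m^{NN}_k(x^k+s^k)=\tfrac12 (s^k)^{T}(B_k+w^{*}I)s^k+\tfrac12 w^{*}\|s^k\|^2 .
\end{equation*}
Now $(s^k)^{T}(B_k+w^{*}I)s^k=-(s^k)^{T}g_k=-(s^k)^{T}\nabla_x m^{NN}_k(x^k)$, which Lemma~\ref{lemma_upperbound_for_sd} bounds below by $\Lambda\|s^k\|^2$, while $\tfrac12 w^{*}\|s^k\|^2\ge 0$ because $w^{*}\ge 0$. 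Hence $m^{NN}_k(x^k)-m^{NN}_k(x^k+s^k)\ge \tfrac{\Lambda}{2}\|s^k\|^2$, and one takes $\beta=\Lambda/2$, the same constant for every $k$.

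The only genuine obstacle is guaranteeing that $\beta$ (equivalently $\Lambda$) is strictly positive and independent of $k$, since Lemma~\ref{lemma_upperbound_for_sd} only asserts $\Lambda\ge 0$. Here I would appeal to the curvature‑penalty terms built into the losses: the term $\bar{\zeta_2}(\lambda_1-c)^2$ in (\ref{m64}) with $c>0$, where $\lambda_1$ is the smallest eigenvalue of $\nabla_{xx}m^{NN}_k$ (and, in the active‑bound case, the analogous $\zeta_3(\tilde\lambda_1-\tilde c)^2$ in (\ref{m67})), is minimized precisely when that smallest eigenvalue equals the fixed positive target; since the network weights and $\nabla_{xx}m^{NN}_k$ are bounded uniformly in $k$ by Assumption~\ref{NTR assum1} and Lemma~\ref{NTR:lemma2}, and $B_k$ is built positive semidefinite, this forces $\Lambda\ge c>0$ for all $k$. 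The degenerate case $s^k=0$ needs no argument, as both sides of the claimed inequality vanish. (An alternative route would be to chain the Cauchy‑decrease term $L_3$ of (\ref{NTR-Cauchy}) with (\ref{emroozap1}), but that produces a bound in terms of $\|g_k\|$ and $\Delta_k$ rather than $\|s^k\|^2$, so the optimality‑condition argument above is the cleaner one for this statement.)
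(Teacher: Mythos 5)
Your proposal follows essentially the same route as the paper: expand the quadratic model decrease, substitute the stationarity relation $(\nabla_{xx}m_k^{NN}+w^*I)s^k=-\nabla_x m_k^{NN}(x^k)$ from Lemma~\ref{lemma_upperbound_for_sd}, and conclude $m_k^{NN}(x^k)-m_k^{NN}(x^k+s^k)\geq \tfrac{\Lambda}{2}\|s^k\|^2$. If anything your version is slightly more careful than the paper's — you keep the true Hessian $B_k$ in the Taylor expansion and carry the nonnegative $\tfrac12 w^*\|s^k\|^2$ term explicitly, and you address why $\Lambda$ can be taken strictly positive and uniform in $k$ (via the curvature-penalty terms), a point the paper's proof passes over when it simply asserts the existence of $\beta\in(0,1)$.
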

\begin{proof}

Without loss of generality, suppose a $\ell=1$ neural network model with and $h$ hidden neurons is employed,
\begin{equation}
m_{k}^{NN}(x)=\sum_{j=1}^{h} w_j^{(2)}\sigma \left(\sum_{i=1}^{n}w_{ij}^{(1)}x_i \right),
\end{equation}
where $x=(x_1,x_2, \dots, x_n)$, $\sigma$ is sigmoid function. So
\begin{equation}
m_k^{NN}(x^k)-m_k^{NN}(x^k+s^k)=-(s^k)^T\nabla_x m_k^{NN}(x^k)-\frac{1}{2}(s^k)^T \left (\nabla_{xx}m_k^{NN}+w^*I \right)(x^k)(s^k),
\end{equation}
where $w^*$ is determined by the neural-network to minimize the loss function. By the definition of the $L_2$ in (\ref{erm61}), we have
\begin{equation}
m_k^{NN}(x^k)-m_k^{NN}(x^k+s^k)=-(s^k)^T\nabla_x m_k^{NN}(x^k)+\frac{1}{2}(s^k)^T\nabla_x m_k^{NN}(x^k),
\end{equation}
so
\begin{equation}
m_k^{NN}(x^k)-m_k^{NN}(x^k+s^k)=-\frac{1}{2}(s^k)^T\nabla_x m_k^{NN}(x^k).
\end{equation}
By Lemma \ref{lemma_upperbound_for_sd}, there exists a constant $\beta \in (0,1)$ such that
\begin{equation}\label{ap261}
m_k^{NN}(x^k)-m_k^{NN}(x^k+s^k)\geq \beta \|s^k\|^2.
\end{equation}

\end{proof}
\begin{corollary}
If the neural-network model finds nonzero vector weights $w^s=(w_1^s, w_2^s, \dots, w_n^s) \neq 0$, then 
\begin{equation}
m_k^{NN}(x^k+s^k)<m_k^{NN}(x^k).
\end{equation}
\end{corollary}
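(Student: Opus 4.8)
The plan is to read the corollary off directly from Theorem~\ref{NTR-Theorem1} together with the parameterization~(\ref{m91}) of the trial step. First I would recall that the child network represents the step as $s^k=\sum_{i=1}^{n} w_i^s e_i$, so that in the Euclidean norm $\|s^k\|^2=\sum_{i=1}^{n}(w_i^s)^2$. The hypothesis $w^s=(w_1^s,\dots,w_n^s)\neq 0$ means at least one component $w_i^s$ is nonzero, hence $\|s^k\|^2>0$, i.e.\ $\|s^k\|>0$.

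Next I would invoke Theorem~\ref{NTR-Theorem1}, which guarantees a constant $\beta>0$ (independent of $k$) with
\begin{equation*}
m_k^{NN}(x^k)-m_k^{NN}(x^k+s^k)\geq \beta\,\|s^k\|^2 .
\end{equation*}
Combining this inequality with $\|s^k\|^2>0$ gives $m_k^{NN}(x^k)-m_k^{NN}(x^k+s^k)\geq \beta\,\|s^k\|^2>0$, and therefore $m_k^{NN}(x^k+s^k)<m_k^{NN}(x^k)$, which is exactly the claim.

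There is essentially no hard step here; the entire content is already carried by Theorem~\ref{NTR-Theorem1} and Lemma~\ref{lemma_upperbound_for_sd}. The one point one must be careful about is that $\beta$ is \emph{strictly} positive rather than merely nonnegative --- this is the place where the argument would collapse if $\beta$ were allowed to vanish --- but that strict positivity is precisely what Theorem~\ref{NTR-Theorem1} asserts (it stems from the quadratic-reduction identity $m_k^{NN}(x^k)-m_k^{NN}(x^k+s^k)=-\tfrac12 (s^k)^T\nabla_x m_k^{NN}(x^k)$ combined with the bound of Lemma~\ref{lemma_upperbound_for_sd}). A secondary remark worth including is that the conclusion needs no assumption that $s^k$ lies strictly inside the trust region: it holds for \emph{any} nonzero step the parent--child network produces, since that reduction identity only uses the optimality relations encoded in $L_2$ of~(\ref{erm61}).
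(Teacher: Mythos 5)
Your argument is correct and matches the paper's own proof exactly: both deduce $s^k=\sum_i w_i^s e_i\neq 0$ from $w^s\neq 0$ and then apply the reduction bound (\ref{ap261}) of Theorem \ref{NTR-Theorem1} to conclude strict decrease. Your added remarks on the strict positivity of $\beta$ are a reasonable elaboration but do not change the route.
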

\begin{proof}
Clearly if $w^s=(w_1^s, w_2^s, \dots, w_n^s) \neq 0$, then $s^k=\sum_{i=1}^n w_i^se_i \neq 0$, and from (\ref{ap261}), the desired result can be obtained.
\end{proof}

\begin{corollary} 
In Algorithm \ref{alg:ALG1}, where the quadratic model and the loss function given by (\ref{erm611}) are employed to train a neural-network model $m_k^{NN}$, for all $k$ and for some $\hat{c}>0$ the subproblem solution satisfies 
\begin{equation}\label{emrooz2app}
\left[m^{NN}_k(x^k)-m^{NN}_k(x^k+s^k)\right]\geq \hat{c}\left[m_k(x^k)-m_k(x^k +s^C)\right],
\end{equation}
where $s^C=-t_k^C \nabla m_k(x^k)$ is the Cauchy step.
\end{corollary}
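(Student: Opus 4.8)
The plan is to show that when the loss (\ref{erm611}) is minimized, the step $s^k$ is not merely a step with guaranteed decrease but is the \emph{exact} global solution of the trust-region subproblem $\min_{\|s\|\le\Delta_k} m_k^{NN}(x^k+s)$; since the Cauchy step $s^C$ is feasible for that same subproblem, the inequality (\ref{emrooz2app}) follows immediately, in fact with $\hat c=1$.

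First I would reuse the analysis already carried out in the proof of Lemma \ref{lemma_upperbound_for_sd}: minimizing (\ref{erm611}) (whose $L_2$ term is (\ref{m67})) drives $L_2$ to zero, so there is a multiplier $w^*\ge 0$ with
\[
\bigl(\nabla_{xx}m_k^{NN}(x^k)+w^*I\bigr)s^k=-\nabla_x m_k^{NN}(x^k),\qquad w^*\bigl(\Delta_k-\|s^k\|\bigr)=0,
\]
where $\nabla_{xx}m_k^{NN}(x^k)+w^*I$ is positive semidefinite (the construction builds the model Hessian, resp. its shift, positive semidefinite, and it is bounded by Lemma \ref{NTR:lemma2}) and $\|s^k\|\le\Delta_k$. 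These are exactly the necessary and sufficient conditions characterizing a global minimizer of $s\mapsto m_k^{NN}(x^k+s)$ over the ball $\|s\|\le\Delta_k$ (the Mor\'e--Sorensen / trust-region characterization, see \cite{conn2000trust}). Hence $m_k^{NN}(x^k+s^k)\le m_k^{NN}(x^k+s)$ for every $s$ with $\|s\|\le\Delta_k$.

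Next I would observe that the Cauchy step is feasible for this subproblem. By Lemma \ref{NTR:lemma1} the model gradient $\nabla m_k(x^k)=\nabla_x m_k^{NN}(x^k)$ exists, and by the definition of $t_k^C$ the point $x^k-t_k^C\nabla m_k(x^k)$ lies in $\Omega_k$, i.e. $\|s^C\|\le\Delta_k$. Applying the inequality of the previous paragraph with $s=s^C$ and rearranging gives
\[
m_k^{NN}(x^k)-m_k^{NN}(x^k+s^k)\ \ge\ m_k^{NN}(x^k)-m_k^{NN}(x^k+s^C)\ \ge\ 0,
\]
the last inequality being (\ref{emroozap1}) (\cite{CSV}) and the left-hand side nonnegative by Theorem \ref{NTR-Theorem1}; in particular (\ref{emrooz2app}) holds with any $\hat c\in(0,1]$.

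The only real obstacle is the claim in the first step that the zero-residual $L_2$ conditions identify a \emph{global} minimizer of the subproblem rather than just a stationary point; this is precisely where the positive-semidefiniteness of $\nabla_{xx}m_k^{NN}+w^*I$ enters, via the classical characterization of trust-region subproblem solutions. A minor, purely notational, point is the identification $m_k\equiv m_k^{NN}$ used throughout. Should one be unwilling to assume that training attains $L_2=0$ exactly, the argument would instead have to be run quantitatively, combining the lower bound $m_k^{NN}(x^k)-m_k^{NN}(x^k+s^k)\ge\beta\|s^k\|^2$ of Theorem \ref{NTR-Theorem1} with an explicit upper estimate of the Cauchy reduction (using that the model Hessian is built positive semidefinite and bounded, Lemma \ref{NTR:lemma2}), and I expect that strengthening to be the delicate part.
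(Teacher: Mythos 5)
Your argument is correct under the same idealization the paper itself adopts in Lemma \ref{lemma_upperbound_for_sd} (that training drives the $L_2$ residual in (\ref{m67}) exactly to zero), but it takes a genuinely different route from the paper's proof. You invoke the Mor\'e--Sorensen characterization: the zero-residual conditions $(\nabla_{xx}m_k^{NN}+w^*I)s^k=-\nabla_x m_k^{NN}$, $w^*(\Delta_k-\|s^k\|)=0$, $\nabla_{xx}m_k^{NN}+w^*I\succeq 0$, $\|s^k\|\le\Delta_k$ are precisely the necessary and sufficient conditions for $s^k$ to be a \emph{global} minimizer of the quadratic model over the trust region, so $s^k$ dominates every feasible step, in particular the Cauchy step, and (\ref{emrooz2app}) follows with $\hat c=1$. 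The paper instead argues by orders of magnitude: Theorem \ref{NTR-Theorem1} gives a reduction at least $\beta\|s^k\|^2=\beta\bar c^2\Delta_k^2$, and under the additional heuristic assumption $\Delta_k\approx\|\nabla m_k(x^k)\|$ the Cauchy decrease bound (\ref{emroozap1}) is also of order $\Delta_k^2$, whence some $\hat c>0$ exists. Your version buys a sharper and cleaner conclusion ($\hat c=1$, no appeal to $\Delta_k\approx\|\nabla m_k(x^k)\|$, and no hidden requirement that $\bar c=\|s^k\|/\Delta_k$ be bounded away from zero uniformly in $k$), at the price of leaning harder on the exact-global-optimality idealization and on the model being genuinely quadratic --- both of which are legitimate here, since the corollary concerns Algorithm \ref{alg:ALG1} with the quadratic parametrization and the paper already assumes exact zero residual elsewhere. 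The paper's version, though less rigorous as written, is the one that survives if $s^k$ is only an approximate subproblem solution, which is why your closing remark about the quantitative fallback is apt.
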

\begin{proof}
By Theorem \ref{NTR-Theorem1} we have
\begin{equation}
m_k^{NN}(x^k)-m_k^{NN}(x^k+s^k)\geq \beta \|s^k\|^2,
\end{equation}
and since $\|s^k\| \leq \Delta_k$, which means $\|s^k\| ={\bar{c}} \Delta_k$ for some constant $\bar{c} \in (0,1]$, we have 
\begin{equation}
m_k^{NN}(x^k)-m_k^{NN}(x^k+s^k)\geq \beta \bar{c}^2 \Delta_k^2,
\end{equation}
which means the model reduction is of order $\Delta_k^2$. On the other hand, without loss of generality, if we assume $\Delta_k \approx \|\nabla m_k(x^k)\|$, then the the model decrease attained by the Cauchy step in (\ref{emroozap1}) is of order $\Delta_k^2$. Therefore, there exists $\hat{c} >0$ that satisfies (\ref{emrooz2app}).
\end{proof}

\begin{lemma}\label{radiustozero}
Any sequence $\{ \Delta_k\}$ produced by Algorithm \ref{alg:ALG1} satisfies
\begin{equation}\label{deltazero}
\lim\limits_{k \to \infty} \Delta_k=0.
\end{equation}
\end{lemma}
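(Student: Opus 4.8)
The plan is to argue by contradiction, using that the trust-region radius is monotone under Algorithm~\ref{alg:ALG1}. First I would check that $\Delta_{k+1}\le\Delta_k$ in every branch of Step~3: on a successful iteration ($\rho_k\ge\eta_1$) we have $\Delta_{k+1}=\gamma_2\Delta_k$ with $\gamma_2\le 1$; on an unsuccessful iteration with an adequate interpolation set, $\Delta_{k+1}=\gamma_1\Delta_k$ with $\gamma_1<1$; and on an unsuccessful iteration with an inadequate set only a geometry-improving point swap \eqref{ap121}--\eqref{ap122} is performed and $\Delta_{k+1}=\Delta_k$. Hence $\{\Delta_k\}$ is non-increasing and bounded below by $0$, so it converges to some $\Delta^\ast\ge 0$; assume for contradiction that $\Delta^\ast>0$, so $\Delta_k\ge\Delta^\ast$ for all $k$. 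If there were infinitely many unsuccessful-adequate iterations, then $\Delta_k$ would be multiplied by the factor $\gamma_1<1$ infinitely often while never increasing, forcing $\Delta_k\to 0$, which is a contradiction; hence only finitely many iterations are of that type.

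The heart of the argument is to show that, under the hypothesis $\Delta_k\ge\Delta^\ast>0$, only finitely many iterations are successful. On a successful iteration, Step~2 gives $f(x^k)-f(x^{k+1})=f(x^k)-f(x^k+s^k)\ge\eta_1\big[m^{NN}_k(x^k)-m^{NN}_k(x^k+s^k)\big]$. Using the Cauchy-type reduction guaranteed for the neural-trust-region model, namely \eqref{emroozap1}--\eqref{emrooz2app}, together with the uniform Hessian bound $\|\nabla_{xx}m^{NN}_k\|\le\kappa_{hm}$ of Lemma~\ref{NTR:lemma2}, the bracket is bounded below by a fixed multiple of $\|\nabla_x m^{NN}_k(x^k)\|\min\{\|\nabla_x m^{NN}_k(x^k)\|/\kappa_{hm},\ \Delta^\ast\}$. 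Since the run under consideration does not meet the stopping tolerance $\epsilon$ introduced in Step~0, the model criticality measure stays bounded away from zero, i.e.\ $\|\nabla_x m^{NN}_k(x^k)\|\ge\epsilon$, so each successful iteration decreases $f$ by at least a fixed amount $\delta:=\tfrac12\,\eta_1\hat{c}\,\epsilon\min\{\epsilon/\kappa_{hm},\ \Delta^\ast\}>0$. Because $f$ is bounded below (Assumption~\ref{NTR assum1}) while $f(x^{k+1})\le f(x^k)$ for every $k$, the telescoping sum $\sum_k\big(f(x^k)-f(x^{k+1})\big)$ is finite, so only finitely many iterations can be successful.

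Combining the two counts, there is an index $K$ beyond which every iteration is unsuccessful with an inadequate interpolation set, so Algorithm~\ref{alg:ALG1} performs only the geometry-improving replacements \eqref{ap121}--\eqref{ap122} indefinitely. But by the poisedness-improvement results of Conn et al.\ \cite{conn1997convergence,conn1997recent}, finitely many such replacements render $Y$ adequate, after which the next iteration can no longer fall in the unsuccessful-inadequate case, a contradiction. Therefore $\Delta^\ast=0$, that is $\lim_{k\to\infty}\Delta_k=0$. The step I expect to be the main obstacle is the sufficient-decrease estimate of the second paragraph: it requires invoking the Cauchy-type model reduction for the loss actually minimized in Step~1, bounding $\|\nabla_{xx}m^{NN}_k\|$ through Lemma~\ref{NTR:lemma2}, and making explicit the implicit running hypothesis that the stopping criterion governed by $\epsilon$ has not fired, so that $\|\nabla_x m^{NN}_k(x^k)\|$ --- and hence, via the subproblem optimality and complementarity conditions, $\|s^k\|$ --- stays uniformly bounded below on successful iterations; the finite termination of the geometry-repair loop is the secondary technical point.
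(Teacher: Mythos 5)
Your argument is correct in outline and shares the paper's central mechanism, but it is organized quite differently. The paper's proof is much shorter: it assumes $\Delta_k\not\to 0$, extracts a subsequence with $\Delta_{k_t}>\epsilon$, argues from Step~3 that this forces infinitely many successful iterations, and then applies Theorem~\ref{NTR-Theorem1} to get $f(x^{k})-f(x^{k+1})\ge\eta_1\beta\|s^k\|^2$ on each one, summing to contradict Assumption~\ref{NTR assum1}. You instead run a three-way census of iteration types: finitely many unsuccessful-adequate (else the $\gamma_1$ factor kills the radius), finitely many successful (via a per-iteration decrease bounded below by a fixed $\delta$), and then a terminal contradiction from the geometry-repair loop. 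Your extra care with the unsuccessful-inadequate branch is something the paper simply omits, and your observation that $\{\Delta_k\}$ is monotone non-increasing under this algorithm (since $\gamma_2\le 1$) is a genuine strengthening the paper does not exploit. The trade-off shows up in how each proof gets a uniform lower bound on the decrease at successful iterations: you invoke the Cauchy-type reduction together with a criticality threshold $\|\nabla_x m_k^{NN}(x^k)\|\ge\epsilon$, which requires a stopping test that Step~0 declares ($\epsilon>0$) but Algorithm~\ref{alg:ALG1} never actually applies --- you rightly flag this as the weak point. The paper avoids that hypothesis by using $\beta\|s^k\|^2$ directly, but then asserts $\sum_q\|s^{k_q}\|^2=\infty$ without justifying that $\|s^{k_q}\|$ stays bounded away from zero on successful iterations, so it has a symmetric gap at exactly the same spot. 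Neither route closes this hole cleanly; yours at least names it.
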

\begin{proof}
By contradiction, suppose $\lim \limits_{k \to \infty} \Delta_k \neq 0$, which means there exist $\epsilon >0$ and a subsequence $\{ \Delta_{k_t}\}_{t=1}^{\infty}$ such that $\Delta_{k_t}> \epsilon$ for all $t=1,2, \dots$, which means there exists an infinite number of iterations for which $\Delta_k > \epsilon$. Thus, Step 3 in Algorithm \ref{alg:ALG1} indicates that there exist infinite number of successful iterations, at which $\rho_k \geq \eta_1$. By the definition of $\rho_k$ and Theorem \ref{NTR-Theorem1}, for a successful iterate, we have
\begin{equation}
f(x_k)-f(x_{k+1}) \geq \eta_1 \beta \|s^k\|^2.
\end{equation}
Suppose $S=\{k_s | \rho_{k_s} \geq \eta_1 \text{  for } k_s \in \{0, 1, \dots\}\}$ be the set of successful indices, and $k_i$ is the $i^{\text{th}}$ successful iterate index. So
\begin{equation}
f(x_{k_1})-f(x_{k_{i}}) \geq \eta_1 \beta \sum_{q=1}^{i} \|s^{k_q}\|^2,
\end{equation}
which means
\begin{equation}
\lim \limits_{i \to \infty}f(x_{k_1})-f(x_{k_{i}}) =\infty,
\end{equation}
that contradicts Assumption \ref{NTR assum1}. Hence, (\ref{deltazero}) is true.
\end{proof}

In the following theorems, we show that Algorithm \ref{alg:ALG1} converges to a Clarke stationary point. The proofs for these theorems are similar to what can be found in \cite{Audet} and \cite{Liuzzi}.

\begin{theorem}\label{NTR-Theorem2}
The neural-trust-region algorithm \ref{alg:ALG1} terminates in a finite number of iterations, or generates an infinite sequence $\{(x^k,s^k)\}_{k=1}^{\infty}$ such that 
\begin{equation}
\lim \limits_{k \to \infty}f^o(x^k, \frac{s^k}{\|s^k\|})\geq 0.
\end{equation}
\end{theorem}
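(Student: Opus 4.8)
The plan is to argue by contradiction, assuming the algorithm does not terminate finitely, so it produces an infinite sequence $\{(x^k,s^k)\}$, and that $\limsup_{k\to\infty} f^o(x^k, s^k/\|s^k\|) < 0$ fails in the sense that along some subsequence $f^o(x^{k_t}, s^{k_t}/\|s^{k_t}\|) \le -\varepsilon < 0$ for all $t$. First I would invoke Lemma \ref{radiustozero}, which gives $\Delta_k \to 0$, hence $\|s^k\| \le \Delta_k \to 0$ as well. The strategy is then to pass to a further subsequence along which the normalized steps $d^{k_t} = s^{k_t}/\|s^{k_t}\|$ converge to some unit vector $d^*$ (possible since the unit sphere is compact) and the iterates $x^{k_t}$ converge to some point $x^*$ (this requires an argument that the iterates stay in a bounded set — I would obtain this from Assumption \ref{NTR assum1}, since $f$ is bounded below and the accepted steps produce a monotone decrease, so the iterates lie in a level set; if the level set is not a priori bounded one assumes it, as is standard, or restricts to a convergent subsequence by a coercivity-type hypothesis implicit in ``bounded below'').

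The heart of the argument is to relate the model decrease to the Clarke derivative. By Theorem \ref{NTR-Theorem1}, every iteration (successful or not) satisfies $m_k^{NN}(x^k) - m_k^{NN}(x^k+s^k) \ge \beta\|s^k\|^2 > 0$, so the model strictly decreases along $s^k$. Combining this with the validity Assumption \ref{NTR:assum3}, $|f(x) - m_k^{NN}(x)| \le \kappa \Delta_k^2$ for $x \in \Omega_k$, I would estimate
\begin{equation}
f(x^k+s^k) - f(x^k) \le m_k^{NN}(x^k+s^k) - m_k^{NN}(x^k) + 2\kappa\Delta_k^2 \le -\beta\|s^k\|^2 + 2\kappa\Delta_k^2.
\end{equation}
For iterations where $\|s^k\|$ is comparable to $\Delta_k$ this gives a genuine decrease of order $\Delta_k^2$; for iterations where $\|s^k\| \ll \Delta_k$ one must be more careful. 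The key mechanism: if $\rho_k < \eta_1$ and the set is adequate, then $\Delta_{k+1} = \gamma_1\Delta_k$, so unsuccessful iterations shrink the radius; since $\Delta_k \to 0$ anyway, I would focus on extracting information from the step-acceptance test. Dividing the decrease estimate by $\|s^k\|$ and using $f(x^k+s^k) - f(x^k) = f(x^k + \|s^k\| d^{k}) - f(x^k)$, then taking the limit superior along the subsequence while using the definition of $f^o(x^*; d^*) = \limsup_{y\to x^*,\ \alpha\downarrow 0} \frac{f(y+\alpha d^*) - f(y)}{\alpha}$ and the Lipschitz continuity of $f$ (to replace $d^k$ by $d^*$ at a cost of $O(\|d^k - d^*\|)$), I would derive $f^o(x^*; d^*) \ge 0$, or more precisely show that along successful iterations the normalized decrease tends to a nonnegative quantity, contradicting the assumed bound $\le -\varepsilon$.

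The main obstacle I anticipate is handling the unsuccessful iterations and the relationship between $\|s^k\|$ and $\Delta_k$: the theorem's conclusion is phrased in terms of the direction $s^k/\|s^k\|$ at \emph{every} large $k$, not just successful ones, so I need an argument that at an unsuccessful iteration the failure of $\rho_k \ge \eta_1$ together with model validity (Assumption \ref{NTR:assum3}) and the model decrease (Theorem \ref{NTR-Theorem1}) forces $\|s^k\|$ to be small relative to $\Delta_k$ in a way that still yields the directional-derivative inequality in the limit — this is exactly the kind of ``the step direction becomes a descent direction asymptotically or else the trust region shrinks'' dichotomy used in the Clarke-stationarity arguments of \cite{Audet} and \cite{Liuzzi}, which is cited as the template. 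A secondary technical point is justifying the boundedness of $\{x^k\}$ needed for the convergent subsequence; I would state this as following from the level-set structure guaranteed by Assumption \ref{NTR assum1} together with the monotone decrease $f(x^{k+1}) \le f(x^k)$ on accepted steps.
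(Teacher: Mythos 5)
Your proposal assembles the right ingredients (Lemma \ref{radiustozero} giving $\Delta_k\to 0$ and hence $\|s^k\|\to 0$, Theorem \ref{NTR-Theorem1} giving $m_k^{NN}(x^k)-m_k^{NN}(x^k+s^k)\geq\beta\|s^k\|^2>0$, and the identification of limiting difference quotients with $f^o$), but your central estimate points in the wrong direction. To conclude $f^o(x^k,s^k/\|s^k\|)\geq 0$ you need a \emph{lower} bound on the quotient $\bigl(f(x^k+s^k)-f(x^k)\bigr)/\|s^k\|$ that tends to $0$; your key inequality $f(x^k+s^k)-f(x^k)\leq -\beta\|s^k\|^2+2\kappa\Delta_k^2$, obtained from Theorem \ref{NTR-Theorem1} together with Assumption \ref{NTR:assum3}, is an \emph{upper} bound, and an upper bound on the quotients along one particular sequence gives no lower bound on the Clarke $\limsup$ that defines $f^o$. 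The paper gets the needed lower bound from the quantitative content of the \emph{failed acceptance test}: on the unsuccessful indices $U=\{k:\rho_k<\eta_1\}$, since the denominator $m_k^{NN}(x^k)-m_k^{NN}(x^k+s^k)$ is strictly positive by Theorem \ref{NTR-Theorem1}, the inequality $\rho_k<\eta_1$ rearranges to $f(x^k+s^k)-f(x^k)>-\eta_1\bigl[m_k^{NN}(x^k)-m_k^{NN}(x^k+s^k)\bigr]$, a lower bound of order $\|s^k\|^2$ (the paper writes it as $>-\eta_1\beta\|s^k\|^2$); dividing by $\|s^k\|$ and sending $\|s^k\|\to 0$ then forces the limiting quotients, and hence $f^o$, to be nonnegative. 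You mention the acceptance test only as the mechanism that shrinks the radius, not as the source of this lower bound, and that is the missing idea; it cannot be recovered from validity of the model alone, since that route would additionally require $\|\nabla m_k^{NN}(x^k)\|\to 0$, which is not established at this stage.

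Two secondary points of divergence: the paper's argument is direct rather than by contradiction --- it works only on the unsuccessful indices (which are infinitely many because $\Delta_k\to 0$) and states the conclusion at the iterates themselves, so none of the compactness/boundedness machinery you invoke to extract convergent subsequences of $x^k$ and of the unit directions is needed here (that kind of argument appears only later, in Theorem \ref{NTR-Theorem3}). Also, Assumption \ref{NTR:assum3} plays no role in the paper's proof of this theorem.
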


\begin{proof}
Without loss of generality, let $\{\Delta_k\}$ be the sequence of trust-region radii such that
\begin{center}
$\lim \limits_{k \to \infty}\Delta_k=\Delta^*.$
\end{center}
By Lemma \ref{radiustozero}, if Algorithm \ref{alg:ALG1} does not terminate in a finite number of iterations, then $\Delta^*=0$. Let $U=\{k: \rho_k < \eta_1\}$ be the set of unsuccessful indices, at which $\Delta_{k+1} <\Delta_k$. So for each $k \in U$,
\begin{equation}
\frac{f(x^k) -f(x^k+s^k)}{m_k^{NN}(x^k)-m_k^{NN}(x^k+s^k) }< \eta_1,
\end{equation}
so
\begin{equation}
f(x^k+s^k)-f(x^k) >-\eta_1 \left [m_k^{NN}(x^k)-m_k^{NN}(x^k+s^k)\right ],
\end{equation}
and by Theorem \ref{NTR-Theorem1},
\begin{equation}
f(x^k+s^k)-f(x^k) >-\eta_1 \beta \|s^k\|^2.
\end{equation}
Hence,
\begin{equation}
\frac{f(x^k+s^k)-f(x^k)}{\|s^k\|} >-\eta_1 \beta \|s^k\|,
\end{equation}
which is 
\begin{equation}
\frac{f(x^k+\|s^k\|( \frac{s^k}{\|s^k\|}))-f(x^k)}{\|s^k\|} >-\eta_1 \beta \|s^k\|,
\end{equation}
and
\begin{equation}
\lim \limits_{k \to \infty}\frac{f\left(x^k+\|s^k\|( \frac{s^k}{\|s^k\|})\right)-f(x^k)}{\|s^k\|} >\lim \limits_{k \to \infty} -\eta_1 \beta \|s^k\|.
\end{equation}
But if $\Delta_k \to 0$, then $\|s^k\| \to 0$, which means 
\begin{equation}
\lim \limits_{\substack{{x^k \to x^*}  \\ { \alpha \to 0^+}}}\frac{f\left(x^k+\alpha( \frac{s^k}{\|s^k\|})\right)-f(x^k)}{\alpha} >\lim \limits_{\alpha \to 0^+} -\eta_1 \beta \alpha,
\end{equation}
which is
\begin{equation}
\lim \limits_{\substack{{x^k \to x^*}  \\ { \alpha \to 0^+}}} \text{sup}\frac{f\left(x^k+\alpha( \frac{s^k}{\|s^k\|})\right)-f(x^k)}{\alpha}=\lim \limits_{k \to \infty}f^o(x^k, \frac{s^k}{\|s^k\|}) \geq 0.
\end{equation}
\end{proof}

\begin{theorem} \label{NTR-Theorem3}
The neural-trust-region algorithm \ref{alg:ALG1} terminates in a finite number of iterations, or generates an infinite sequence $\{x^k\}$ which satisfies
\begin{equation*}
\lim_{k\to\infty} x^k=x^*,
\end{equation*}
where $f^o(x^*; d) \geq 0$ for all $d \in \mathbb{R}^n$.
\end{theorem}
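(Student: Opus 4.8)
The plan is to combine the fact that $\Delta_k\to 0$ (Lemma \ref{radiustozero}) with the uniform model‑quality bound of Assumption \ref{NTR:assum3}, the Hessian bound of Lemma \ref{NTR:lemma2}, and the conclusion of Theorem \ref{NTR-Theorem2}. First I would show that $\{x^k\}$ converges: at every successful iteration $\rho_k\ge\eta_1$ and Theorem \ref{NTR-Theorem1} give $f(x^k)-f(x^{k+1})\ge\eta_1\beta\|s^k\|^2$, while $x^{k+1}=x^k$ otherwise, so since $f$ is bounded below (Assumption \ref{NTR assum1}) the successful decreases are summable and $\|s^k\|\to 0$; because $0<\gamma_1<\gamma_2\le 1$ the radii $\{\Delta_k\}$ are non‑increasing, so $\|s^k\|\le\Delta_k\to 0$ and the iterates accumulate at a single point $x^*$. (Turning this into a genuine Cauchy argument is the first delicate point: it is immediate when $\gamma_2<1$, since then $\Delta_k$ decays geometrically along the successful iterations, and in general requires a more careful accounting of when the radius strictly shrinks, as in \cite{Audet},\cite{Liuzzi}.)

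Next, fix an arbitrary direction $d\in\mathbb{R}^n$ with $\|d\|=1$ and evaluate $f$ at $x^k+\Delta_k d$, which lies in the current trust region (\ref{m81}). Applying Assumption \ref{NTR:assum3} at both $x^k$ and $x^k+\Delta_k d$, and then expanding the twice‑differentiable model $m_k^{NN}$ with a second‑order remainder controlled by Lemma \ref{NTR:lemma2}, I obtain
\begin{equation*}
\frac{f(x^k+\Delta_k d)-f(x^k)}{\Delta_k}\;\ge\; d^T\nabla_x m_k^{NN}(x^k)-\Big(\tfrac12\kappa_{hm}+2\kappa\Big)\Delta_k .
\end{equation*}
Letting $k\to\infty$, using $\Delta_k\to 0$ and $x^k\to x^*$, the left‑hand side is dominated by $f^o(x^*;d)$ in the sense of the definition (\ref{apr291}) of the Clarke generalized derivative, so $f^o(x^*;d)\ge\limsup_{k}d^T\nabla_x m_k^{NN}(x^k)$. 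Hence the theorem follows once $\nabla_x m_k^{NN}(x^k)\to 0$, because then the right‑hand side is $\ge 0$ for the arbitrary $d$, i.e.\ $0\in\partial f(x^*)$. (Alternatively one can upgrade Theorem \ref{NTR-Theorem2} directly: if the realized step directions $s^k/\|s^k\|$ along the refining subsequence are dense in the unit sphere, then Theorem \ref{NTR-Theorem2}, upper semicontinuity of $f^o$, and Lipschitz continuity of $d\mapsto f^o(x^*;d)$ give $f^o(x^*;d)\ge 0$ for every $d$.)

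The main obstacle is precisely this last ingredient — showing $\nabla_x m_k^{NN}(x^k)\to 0$ (equivalently, first‑order criticality of the models at $x^*$, or density of the refining directions). I would argue it by contradiction in the standard trust‑region fashion: if $\|\nabla_x m_k^{NN}(x^k)\|\ge\epsilon$ along a subsequence, then since $\Delta_k\to 0$ the model is eventually accurate enough on $\Omega_k$ that the Cauchy‑type reduction (\ref{emroozap1}), together with Theorem \ref{NTR-Theorem1} and Lemma \ref{lemma_upperbound_for_sd}, forces $\rho_k\ge\eta_1$ and hence real decreases $f(x^k)-f(x^{k+1})\ge\eta_1\beta\|s^k\|^2$ with $\|s^k\|$ comparable to $\Delta_k$; balancing these against the rate at which $\Delta_k$ must shrink on the unsuccessful iterations contradicts $f$ being bounded below. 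Keeping the constants $\kappa$, $\kappa_{hm}$, $\beta$ and $\Lambda$ uniform in $k$, so that this bookkeeping and the final limit pass go through cleanly, is the technical heart of the argument, handled along the lines of \cite{Audet} and \cite{Liuzzi}.
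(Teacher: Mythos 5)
Your plan takes a genuinely different route from the paper. The paper's own proof is a short contradiction argument layered on Theorem \ref{NTR-Theorem2}: it supposes $f^o(x^*;\tilde d)<0$ for some unit $\tilde d\neq s^*/\|s^*\|$, compares the two Clarke difference quotients, and asserts that as $\Delta_k\to 0$ the step directions $s^k/\|s^k\|$ would have to converge to $\tilde d$, contradicting $\tilde d\neq s^*/\|s^*\|$. Your approach --- bound the difference quotient along an arbitrary fixed direction $d$ using Assumption \ref{NTR:assum3} and Lemma \ref{NTR:lemma2}, then reduce everything to showing $\nabla_x m_k^{NN}(x^k)\to 0$ --- is the standard model-based DFO criticality argument and is, in spirit, more quantitative than what the paper does. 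The inequality you derive for $(f(x^k+\Delta_k d)-f(x^k))/\Delta_k$ is correct and the passage to $f^o(x^*;d)$ via the $\limsup$ in (\ref{apr291}) is legitimate.

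However, as written this is a plan with two genuine gaps, both at the steps you yourself flag as delicate, and neither closes as easily as the sketch suggests. First, convergence of $\{x^k\}$ to a single point: the decrease bound gives only $\sum_{k\in S}\|s^k\|^2<\infty$, which does not make $\sum_k\|s^k\|$ finite, and monotonicity of $\Delta_k$ plus $\Delta_k\to 0$ does not give summability either (take $\gamma_2=1$, which the algorithm permits). So the Cauchy argument is not "immediate when $\gamma_2<1$" in the way you suggest for the general case, and some additional hypothesis or accounting is needed; the paper silently assumes $x^k\to x^*$ as well. Second, and more seriously, the criticality step $\nabla_x m_k^{NN}(x^k)\to 0$: the standard contradiction (model gradient bounded below $\Rightarrow$ eventually successful $\Rightarrow$ radius bounded away from zero) relies on the radius \emph{not decreasing} on successful iterations, but Algorithm \ref{alg:ALG1} sets $\Delta_{k+1}=\gamma_2\Delta_k$ with $\gamma_2\le 1$ on success, so successful iterations can still shrink the radius and Lemma \ref{radiustozero} already forces $\Delta_k\to 0$ unconditionally. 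The alternative contradiction via unbounded descent also fails, since a Cauchy-type decrease of order $\epsilon\Delta_k$ summed over a sequence with $\Delta_k\to 0$ need not diverge. So the "bookkeeping" you defer is not routine: with this radius-update rule there is no obvious mechanism forcing the model gradients to vanish, and Algorithm \ref{alg:ALG1} has no explicit criticality step to supply one. Until that link is established, your argument delivers only $f^o(x^*;d)\ge\limsup_k d^T\nabla_x m_k^{NN}(x^k)$, which is not the claimed conclusion.
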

\begin{proof}
If $\tilde{d}=\frac{s^*}{\|s^*\|}$, then by Theorem \ref{NTR-Theorem2}, the desired result can be obtained. By contradiction, we suppose that there exists a unit vector $\tilde{d} \neq \frac{s^*}{\|s^*\|} \in \mathbb{R}^n$ such that
\begin{equation}
f^o(x^*; \tilde{d}) < 0,
\end{equation}
which means 
\begin{equation}
f^o(x^*, \tilde{d}) =\lim \limits_{\substack{{x^k \to x^*}  \\ { \alpha \to 0^+}}} \text{sup}\frac{f(x^k+\alpha \tilde{d})-f(x^k)}{\alpha} < 0.
\end{equation}
On the other hand, by Theorem \ref{NTR-Theorem2},
\begin{equation}
f^o(x^*, \frac{s^*}{\|s^*\|}) =\lim \limits_{\substack{ { k\to \infty}\\ {x^k \to x^*}  \\ { \alpha \to 0^+}}} \text{sup}\frac{f\left(x^k+\alpha( \frac{s^k}{\|s^k\|})\right)-f(x^k)}{\alpha} \geq 0,
\end{equation}
which means 
\begin{equation}
\lim \limits_{\substack{{ k\to \infty}\\{x^k \to x^*}  \\ { \alpha \to 0^+} \\ { s^k \to s^*}}} \text{sup}\frac{f\left(x^k+\alpha( \frac{s^k}{\|s^k\|})\right) - f(x^k+\alpha \tilde{d})}{\alpha}>0.
\end{equation}
So 
\begin{equation}
\lim \limits_{\substack{{ k\to \infty}\\ {x^k \to x^*}  \\ { \alpha \to 0^+} }} \text{sup} f\left(x^k+\alpha( \frac{s^k}{\|s^k\|})\right) > \lim \limits_{\substack{{ k\to \infty}\\ {x^k \to x^*}  \\ { \alpha \to 0^+} }} \text{sup} f(x^k+\alpha \tilde{d}),
\end{equation}
which means as $k \to \infty$, $\Delta_k \to 0$, $\Omega_k \to \Omega^* \approx \{x^*\}$, then $\frac{s^k}{\|s^k\|} \to \tilde{d}$. This contradicts these assumptions that $s^k \to s^*$ and $\tilde{d} \neq \frac{s^*}{\|s^*\|}$. Thus the desired result is established.
\end{proof}

\subsection{Neural-trust-region algorithm using black-box model}

We now relax the model degree assumption, which means the model $m_k^{NN}$ is not required to be quadratic. However, if we still train and test  the model on the interpolation set, there might be a few drawbacks. First, the trained model $m_k^{NN}$ is not robust and is very sensitive to a small change to the interpolation data set $Y$. In other words, at every iteration, the model must be constructed from scratch, which is computationally expensive. To moderate or eliminate this issue, we split the interpolation set $Y$ into two subsets, training set $Y_{\text{Tr}}$ and test set $Y_{\text{Te}}$,
\begin{equation}
\begin{aligned}
Y_{\text{Tr}}=\left\{ y^{r_1}, y^{r_2}, \dots, y^{r_q} \right\}\\
Y_{\text{Te}}=\left\{ y^{e_1}, y^{e_2}, \dots, y^{e_w} \right\},
\end{aligned}
\end{equation}
where $ y^{r_i}$, $  y^{e_j} \in Y$ and $r_q+e_w=p$. Then the model gets trained on the training set $Y_{\text{Tr}}$ but the loss function gets minimized on $Y_{\text{Te}}$. Note that when we used a neural network to construct a quadratic model in subsection \ref{subsec:unsupervised}, we considered the entire interpolation set $Y$ as the training set $Y_{\text{Tr}}=Y$ and minimized the loss function on $Y_{\text{Te}}=Y$. Hence, the error of the neural-network tends to zero as we increase the number of iterations. While it is not true for the case that we test the model on the test set, where the minimum of loss function might be nonzero even if the number of epochs is arbitrarily increasing. Splitting the interpolation data set into training set and test set bring the variance of the model under control to some extent. It means at every iterate we can update the previous model rather than constructing it again from scratch. 

One might look for the benefit of using a higher-order and black-box model over the quadratic model. A key benefit is the ability to inexpensively model general smooth functions along with derivative information. Moreover, nonquadratic models can more closely approximate the objective function within a trust-region. that might reduce the overall number of evaluations for majority of problems. In order to have a balanced loss function, we might define a new loss that is the weighted avarage of two different loss functions for the points on the boundary of the trust-region and the points lying strictly inside the trust-region.

In order to find a valid model and solve the corresponding subproblem, we can build a \textit{parent-child} net in which the weights of the child depend on the parent weights. A parent neural-network is built to find an initial model of the objective function and the corresponding subproblem while a child neural-network is built to solve the subproblem and tune the weights of the parent neural-network in order to obtain a solution $s^k$ with sufficient reduction on the objective function. Since the neural-trust-region method is based on black-box model, we might look for a balanced model for the objective function on the boundary and within the trust-region. It means, the trained model has a better agreement throughout the trust-region. To accomplish this goal, we train a model by minimizing   
the mean squared error loss 
\begin{equation}\label{loss}
                MSE_{lb}=MSE_{w}+MSE_{b},
\end{equation}
with
\begin{equation}               
 MSE_{w}=\frac{1}{n_w}\sum_{i=1}^{n_w}\left[m_k(s_i)-f(s_i) \right]^2,
 \end{equation}
and 
\begin{equation}
 MSE_{b}=\frac{1}{n_b}\sum_{j=1}^{n_b} \left[m_k(t_j)-f(t_j) \right]^2, 
  \end{equation}
 where 
\begin{equation}\label{ap124}
   \begin{cases}
                                   m_k(s_i)=f(s_i)  & \text{for }  s_i \in S_k \text{ and } i=1, 2, \dots, n_w, \\
                                 m_k(t_j)=f(t_j) & \text{for } t_j \in T_k\text{ and } j=1, 2, \dots, n_b, 
  \end{cases}
\end{equation}
with 

\begin{equation}
S=\left\{y^i \in Y~|~\|y^i -x^k\|< \Delta_k \right\},
\end{equation}
and
\begin{equation}
T=\left \{y^j \in Y~|~\|y^j -x^k\|= \Delta_k \right\},
\end{equation}
respectively. Note that at iterate $k$, if $T$ is empty, then we can sample boundary poits or train the model on only $S$. The the child neural-network becomes active to solve
\begin{equation}\label{may32}
\min \limits_{\|s\| \leq \Delta_k}m_k^{NN},
\end{equation}
where $m_k^{NN}$ is not necessarily quadratic. The child neural-network seeks $x^+=x^k+s^k$ at which not only does the model decrease but also there is a good agreement between the model and the objective function, which is 
\begin{equation}\label{fff1}
\frac{f(x^k) - f({ x^k}^{+})}{ m^{NN}_k ( x^k )-m^{NN}_k( {x^k}^{+} )}>\eta_1
\end{equation}
for a local minimizer, where $\eta_1>0$. We can find $s^k$ satisfying (\ref{fff1}) by training a net to solve 
\begin{equation}
s^k=\arg \min_{s} \left\|\frac{f(x^k) - f( x^k + s)}{ m^{NN}_k ( x^k )-m^{NN}_k( x^k+s )} -\eta_3 \right\|,
\end{equation}
 where $\eta_3>1$ is predetermined. If $\rho_k>\eta_3$, then $x^k+s^k $ is called a ``too successful iteration,'' a concept introduced by J. Walmag et al. \cite{walmag2005note}.
 
A child neural-network might simply find a local minimzer of $m_k^{NN}$ in the current trust-region at which the agreement between the model and the objective function is good. We can find a local minimizer of the subproblem by applying KKT conditions and the augmented Lagrangian method. The Lagrangian function of the subproblem is
\begin{equation}
L(s, \lambda)=m_k^{NN}+\frac{\lambda}{2} \left(\|s\|^2-\Delta_k^2 \right),
\end{equation}
where $\lambda\geq 0$, and

\begin{equation}
\begin{aligned}
\nabla_s L(s,\lambda)=\nabla_s m_k^{NN}+\lambda s,\\
\nabla_{\lambda} L(s,\lambda)=\|s\|^2- \Delta_k^2.
\end{aligned}
\end{equation}

So we look for $\lambda^*$ and $s^*$ such that 
\begin{equation}\label{may31}
\begin{aligned}
\nabla_s L(s^*,\lambda^*)=0,\hspace{1cm}\\
\|s^*\|^2\leq\Delta_k^2,\hspace{1.3cm}\\
\lambda^*(\|s^*\|^2-\Delta_k^2)=0, \text{ and}\hspace{.5cm}\\
 \nabla_{ss} L(s^*,\lambda^*) \text{ is positive definite. }
\end{aligned}
\end{equation}

Suppose $s^*$ is the solution of (\ref{may32}). If $s^* < \Delta_k$, then $s^*$ is the global minimizer of the unconstrained problem
\begin{equation}
\min \limits_{s \in \mathbb{R}^n}m_k^{NN},
\end{equation}
where $m_k^{NN}$ is not necessarily quadratic. So 
\begin{equation}\label{may33}
\begin{aligned}
\nabla_sm_k^{NN}(s^*)=0, \text{ and}\hspace{1cm} \\
\nabla_{ss}m_k^{NN}(s^*) \text{ is positive semidefinite}.
\end{aligned}
\end{equation}
 If $s^* = \Delta_k$, then $s^*$ is the minimizer of the constrained problem
\begin{equation}
\min \limits_{s = \Delta_k}m_k^{NN}.
\end{equation}
So by the second necessary optimality conditions, we have
\begin{equation}\label{may34}
\begin{aligned}
\nabla_s  L(s^*,\lambda^*)=\nabla_s m_k^{NN}(s^*)+\lambda^* s^*=0, \hspace{1cm}\\
\nabla_{ss}  L(s^*,\lambda^*)=\nabla_{ss} m_k^{NN}(s^*)+\lambda^* I \text{ is positive semidefinite.}
\end{aligned}
\end{equation}
On the other hand, if $0 \neq \|s^*\|<\Delta_k$, which means $\|s^*\|-\Delta_k \neq0$, then $\lambda^*$ must be equal to zero so that the first conditions in (\ref{may33}) and (\ref{may34}) hold. $\lambda^*$ must be nonnegative so that the optimality conditions remain valid. Moreover, if $\lambda^*>0$, then the first scenario, $\|s^*\|<\Delta_k$, cannot happen and that means $\|s^*\|-\Delta_k=0$. So, $s^*$ and $\lambda^*$ must satisfy
\begin{equation}
\lambda^*(\|s^*\|-\Delta_k)=0.
\end{equation}
Therefore, we can summarize the optimality conditions for the model $m_k^{NN}$, which is not necessarily quadratic as follows:
\begin{equation}\label{may38}
\begin{aligned}
\nabla_s m_k^{NN}(s^*)+\lambda^* s^*=0, \hspace{1.5cm}\\
\nabla_{ss} m_k^{NN}(s^*)+\lambda^* I \text{ is positive semidefinite, and}\\
\lambda^*(\|s^*\|-\Delta_k)=0.\hspace{1.8cm}
\end{aligned}
\end{equation}

To fulfill this task, the child neural-network seeks a pair $(s,w^*)$, where $w^* \geq0$, satisfying the optimality conditions, 
\begin{equation}\label{may36}
 L_s=\zeta_1 \left[\nabla_s m_k^{NN}(s)+w^* s\right]^2+\zeta_2 \left[w^*(\Delta_k-\|s\|)\right]^2+\zeta_3 \left[\hat{\lambda}_1-\tilde{c}\right]^2,
 \end{equation}
where $ \zeta_1\geq0,$ $ \zeta_2\geq0$ and $ \zeta_3\geq0$ are the weights for each term, $\hat{\lambda}_1$ is the smallest eigenvalue of $\nabla_{ss}m_k^{NN}+w^*I$, $\tilde{c}\geq 0$ is a predetermined constant, 
\begin{equation}\label{m91}
               s=\sum_{i=1}^{n} w^{s}_i e_i.
 \end{equation}
 and
 \begin{equation}\label{f22}
L_{a}=[\frac{f(x^k) - f( x^k + s)}{ m_k ( x^k )-m_k( x^k+s )} -\eta_3]^2.
 \end{equation} 
 
Note that the parent net weights will be actively adjusted while the child neural-network is searching for the minimizer of  (\ref{may32}). So the ultimate goal might be to minimize the loss 
\begin{equation}\label{Loss}
\text{Overall-Loss}=MSE_{lb} + L,
\end{equation}
where $MSE_{lb}$ is defined in (\ref{loss}) and $L$ is defined as
\begin{equation}\label{ls}
                  L=\tilde{\zeta_1} L_{s}+\tilde{\zeta_2}L_{a},
 \end{equation}
where $\tilde{\zeta_1}$ and $ \tilde{\zeta_2}\in [0,1]$ are preassigned constants.

However, training a model that satisfies (\ref{may38}) is too expensive due to $\nabla_s m_k^{NN}$ and $\nabla_{ss} m_k^{NN}$ calculations. For $x^k=(x_1^k, x_2^k, \dots, x_n^k)$ and $s^k=(s_1^k, s_2^k, \dots, s_n^k)$, we have
\begin{equation}\label{117}
m_k^{NN}(x^k+s^k) = m_k^{NN}(x^k)+(s^k)^T \nabla_x m_k^{NN} (x^k) +\frac{1}{2}(s^k)^T\nabla_{xx}m_k^{NN}(x^k)(s^k) +R_{{x^k},2}(s^k),
\end{equation}
where $R_{{x^k},2}(s)$ is the Lagrange remainder term,
\[R_{{x^k},2}(s^k)=\sum_{r_1+r_2+\dots+r_n=3} \left(\frac{\left(s_1^k\right)^{r_1}\left(s_2^k\right)^{r_2}\dots(s_n^k)^{r_n}}{r_1! r_2! \dots r_n!} \right) \left(\frac{\partial ^3 f(x^k+ts^k)}{\partial^{r_1} x_1\partial^{r_2} x_2\dots\partial^{r_n} x_n}\right),
\]
for some $t \in (0,1)$. Moreover,
\begin{equation}
\left| R_{{x^k},2}(s^k) \right| \leq \frac{\kappa_{ug}}{6}\left(\|s^k\|_1\right)^3 \leq \frac{\kappa_{ug}}{6} \left(\sqrt{n}\|s^k\|_2 \right)^3\leq \frac{\kappa_{ug}}{6}\left(\sqrt{n}\Delta_k \right)^3,
\end{equation}
where
\begin{equation}
\left|\frac{\partial^3 m_k^{NN}}{\partial x_p^k\partial x_q^k\partial x_r^k} \right| \leq \kappa_{ug} \hspace{.3cm} \text{for all }\hspace{.1cm}  p, q, r =1, 2, \dots, n.
\end{equation} 
So from (\ref{117}), we have
\begin{equation}
\nabla_{s}m_k^{NN}(s^k)= \nabla_x m_k^{NN} (x^k)+(s^k)^T\nabla_{xx}m_k^{NN}(x^k)+\nabla_{s} R_{{x^k},2}(s),
\end{equation}
and
\begin{equation}
\nabla_{ss}m_k^{NN}(s^k)=\nabla_{xx}m_k^{NN}(x^k)+\nabla_{ss} R_{{x^k},2}(s).
\end{equation}

On the other hand, a neural-network is a powerful tool that means we should not misuse its capabilities by putting unnecessary restriction on it. The aim of (\ref{may38}) is to find the global minimizer of a proper model of the objective function in the current trust-region. We can simply achieve this goal by constructing a parent-child net and defining an appropriate loss function. 

In traditional trust-region methods, the trust-region boundary is a safegaurd to prevent unbounded subproblem arisen from a nonconvex model. We might relax trust-region boundary in a controlled way, which is different from Lagrangian and penalty methods. Suppose at iterate $x^k$, a neural-network is trained to satisfyan appropriate loss function, and finds a point $x^{\text{out}}=x^k+s^{\text{out}}$ outside of the trust region at which the model $m_k^{NN}$ meets a local minimum and the agreement between the model and the objective function remains very good, then we might accept it as the new iterate $x^+=x^{\text{out}}$. This strategy does not violate any convergence conditions and we might update $\Delta_{k+1}=\gamma\|x^k-x^+\|$, where $\gamma>0$. It means, we train likely a nonconvex model, which is continuously differentiable, based on the data points in the current trust-region and we then look for a local minimum of the model at which the objective function value decreases significantly. This approach might reduce the number of model training, which means it might be more efficient for some problems. Thus, a child neural-network might simply replace (\ref{ls}) by  
\begin{equation}
                  L'=L'_{s}+L'_{a},
 \end{equation}
where
 \begin{equation}\label{f21}
 L'_{s}=\left[\frac{1}{n}\sum_{i=1}^{n}\left(\frac{\partial m_k^{NN}(x^k+s^k)}{\partial x_i}\right)^2\right]+ \frac{1}{n}\sum_{i=1}^{n}(H_i^m-c_i)^2,
 \end{equation}
and 
 \begin{equation}\label{f22}
L'_{a}=\left[\frac{f(x^k) - f( x^k + s)}{ m_k ( x^k )-m_k( x^k+s )} -\eta_3\right]^2, 
 \end{equation}      
where $c_i>0$ are preassigned constants, $g_m$ is the gradient of the model and $H_i^m$ is the $i^{\text{th}}$ leading principal minor of $\nabla_{xx}m_k^{NN}$ obtained through backpropagation,
\begin{equation}
H_i^m = det\left(\left[ \begin{array}{cccc} w^H_{11} & w^H_{12} &\hdots & w^H_{1i}\\ w^H_{21} & \ddots & \ddots & \\ & \ddots & \ddots & w^H_{(i-1)(i-1)}\\ w^H_{i1} & \hdots &w^H_{i(i-1)}&w^ H_{ii}
 \end{array} \right]\right).
\end{equation}
Note that in (\ref{f21}) the first term is to assure the gradient of the model is close to $0$ and the second term is to assure the model is concave up at $x^k+s^k$. The second term in (\ref{f21}), which checks the concavity of the function, might be replaced with $\lambda''_1-c$, where $c>0$ and $\lambda''_1$ is the smallest eigenvalue of $\nabla_{xx}m^{NN}_k$, but for training a model by neural-network, the leading principal minors of the Hessian might be beneficial.

For the sake of convergence analysis, we now define an appropriate loss function $L_{BNTR}(W)$ for a neural-trust-region based on a black-box model. At iterate $x^k$, the loss function $L_{BNTR}(W)$ aims to find a point $x^+ \in \Omega_k$ that satisfies

\begin{equation}\label{mayy32}
\begin{aligned}
\|x^k-x^+\|\leq \Delta_k, \hspace{1.1cm}\\
m_k^{NN}(x^+) \leq m_k^{NN}(x^k) - \beta' \|s^k\|^2 ,\text{ and}\\
\rho_k=\frac{f(x^k)-f(x^+)}{m_k^{NN}(x^k)-m_k^{NN}(x^+)}\geq \eta_2,
\end{aligned}
\end{equation}
where $\beta' >0$ is a preassigned constant. We can restate (\ref{mayy32}) as 
\begin{center}
$\|s^k\|\leq \Delta_k, $\\
$m_k^{NN}(x^+) = m_k^{NN}(x^k) - \beta' \|s^k\|^2 +\beta'',\text{ and}$\\
$\rho_k=\eta_2+\eta'=\eta'',$
\end{center}
where $\beta'' \geq 0$ and $\eta'\geq 0$. The loss function is defined as follows:
\begin{equation}\label{may42}
L_{BNTR}=\gamma_1L_{\Delta_k}+\gamma_2\left\|m_k^{NN}(x^k+s^k) - m_k^{NN}\left(x^k\right) + \beta' \|s^k\|^2 -\beta'' \right\|+\gamma_3 \left \|\rho_k-\eta'' \right \|,
\end{equation}
where 
\begin{equation}\label{may772}
 L_{\Delta_k}=
\begin{cases}
\left\|s^k\right\|\left(e^{\left(\left\|s^k\right\|-\Delta_k\right)}-1 \right)-\frac{\left\|s^k\right\|^2}{2}+\frac{\Delta_k^2}{2}& \text{ if }\left\|s^k\right\|>\Delta_k,\\
0& \text{ if }\left\|s^k\right\|\leq\Delta_k.
\end{cases}
\end{equation}
with $\gamma_1, \gamma_2, \gamma_3 \geq0$ are preassigned weights. The first term on right-hand side of (\ref{may42}), $ L_{\Delta_k}$ controls the step legnth $\|s^k\|$ and is twice differentiable. The second term on right-hand side of (\ref{may42}), which significantly depends on values of $\beta'$ and $\beta''$, aims to find a step $s^k$ at which 
\[m_k^{NN}(x^k)-m_k^{NN}(x^k+s^k)\geq \beta \|s^k\|^2,\] 
where $\beta>0$, which means the step $s^k$ is accepted by the child neural-network if the obtained reduction along $s^k$ is at least as small as Cauchy reduction. 

The third term on right-hand side of (\ref{may42}), which significantly depends on values of $\eta''$ seeks a step $s^k$ at which the agreement between the model and the objective function is almost equal to $\eta''$. For instance, if $\eta''=1$, the child neural network looks for a very successful step $s^k$. We might select $\gamma_3$ to be zero, and let the algorithm adjust the trust-region radius if it is needed. 

However, we look for a step $s^k$ that reduces the model value more than the Cauchy step reduction. So the value of $\beta^{'}$, in the second term on right-hand side of (\ref{may42}), plays a key role to find an efficient step $s^k$. Since we use a black-box model in Algorithm \ref{alg:ALG2}, a local minimizer of the model is not a global minimizer in the trust-region. But if the agreement between the model and the objective function at $x^k+s^k$ is very good ($\rho_k \approx 1$) and $x^k+s^k$ is a local minimizer of the model (satisfies second order sufficient optimality conditions), then $s^k$ will lead us to the local minimizer of the objective function. We can also replace the third term on right-hand side of (\ref{may42}) with
\begin{equation}\label{may2525-1}
L_{\text{agreement}}=\cosh (\rho_k-\eta^{''}),
\end{equation}
where $\cosh(.)$ is Hyperboic cosine. So we can modify our search if we use the loss function,
\begin{equation}\label{may26-1}
\begin{aligned}
L^{*}_{BNTR}=\gamma_1L_{\Delta_k}+\gamma_2L_{\text{Cauchy} }+ \gamma_3 L_{\text{local}}+\gamma_4 L_{\text{agreement}},
\end{aligned}
\end{equation}
where $\gamma_1, \gamma_2, \gamma_3, \gamma_4 \geq 0$, $L_{\Delta_k}$ is defined in (\ref{may772}),
\begin{equation}
\begin{aligned}
L_{\text{Cauchy} }=&\left\|m_k^{N}(x^k+s^k) - m_k^{N}\left(x^k\right) + \beta' \|s^k\|^2 -\beta'' \right\|,\\
L_{\text{local}}=&\gamma_2' \left\|\lambda''_1-c\right\| +\gamma_2^{''} \frac{1}{n}\sum_{i=1}^{n}\left(\frac{\partial m_k^{N}(x^k+s^k)}{\partial x_i}\right)^2,
\end{aligned}
\end{equation}
with $\gamma_2', \gamma_2^{''} \geq 0$, $c>0$, $\lambda''_1$ is the smallest eigenvalue of $\nabla_{xx}m^{N}_k$ and $L_{\text{agreement}}$ is defined in (\ref{may2525-1}).

Values of the hyper-parameters  $\gamma_1, \gamma_2, \gamma_3, \gamma_4, \gamma_2' $ and $\gamma_2{''}$ play a key role in the efficiency of the algorithm. For instance, if we are so ambitious and set $\gamma_1=\gamma_3=\gamma_4=M$ and $\gamma_3=0$, where $M$ is a very big number, then the neural-network trust-region algorithm looks for a local minimum of the model within the current trust-region at which the agreement between the model and the objective function is very good. There might not be such a point in the current trust-region, which means the algorithm will not terminate. In this case, we can enlarge the trust-region and sample more points and train a new model and repeat the procedure. But in practice, we do not choose a \textit{Big-M} as the value of hyper-parameters. 
\begin{algorithm}[H]
            \caption{Neural-trust-region algorithm based on a smooth black-box model} 
            \label{alg:ALG2}
            \begin{flushleft}
                \textbf{Step 0: Initialization.} 
                 An initial point $x_0$ and an initial trust-region radius $\Delta_0>0$ are given. The constants  $0\leq \eta_1 \leq \eta_2 \leq1\leq \eta_3$, $0<\gamma_1  <\gamma_2 \leq1 \leq \gamma_3$, $\epsilon>0$, positive integers $n_w$ and $n_b$, which are the initial number of the collocation points for $f(x)$ within the trust-region and training points on the trust-region boundary are also given. Compute $f(x_0)$ and set $k=0$.\\
             \end{flushleft}    
             
             \begin{flushleft}
                \textbf{Step 1: Model definition and Step calculation.}  Sample or update $S=\{s_1,s_2, \dots, s_{n_w}\}$ within trust-region and $T=\{t_1,t_2, \dots, t_{n_b}\}$ on the boundary of the trust-region centered $x^k$ with radius $\Delta_k$. Applying \textbf{Algorithm \ref{alg:ALGmo}}, calculate a trial step $s^k$.
                 
             \end{flushleft}     
             
             \begin{flushleft}
                \textbf{Step 2: Acceptance of trial point.} Compute $f(x^k+s^k)$ and 
                 $$\rho_k = \frac{f(x^k) - f( x^k + s^k )}{ m_k ( x^k )-m_k( x^k+s^k )}.$$
   \hspace{1.2cm}           If $\rho_k\geq \eta_1$, then define $x^{k+1}=x^k+s^k$; otherwise define $x^{k+1}=x_{k}$.

             \end{flushleft}    
             
             \begin{flushleft}
                \textbf{Step 3: Trust-region radius and training set update.} 
                
                 \hspace{1.2cm}   If $\rho_k\geq \eta_2$, then $\Delta_{k+1}=\gamma_3 \Delta_k$, replace $y^- \in Y=S \cup T$ with $x^+=x^k+s^k$, where 
                 
                    \hspace{1.2cm}   $y^-$ is defined in (\ref{y-}).

 \hspace{1.2cm}           If $\rho_k\geq \eta_1$, then $\Delta_{k+1}=\gamma_2 \Delta_k$ and replace $y^- \in Y=S \cup T$ with $x^+=x^k+s^k$, where 
 
 \hspace{1.2cm}  $y^-$ is defined in (\ref{y-}).
 
 \hspace{1.2cm}          If $\rho_k< \eta_1$ \textit{and} the interpolation set $Y$ is inadequate, then for every $y^i \in Y$ find $y_{pr}^i$ by (\ref{ap122}). Then select the 
 
 \hspace{1.2cm}  exiting point $y^-$ by (\ref{ap121}). Finally, replace $y^-$ with a point $y^+$ in the trust-region to improve $D(Y)$ in (\ref{ap123}).
 
  \hspace{1.2cm}           If $\rho_k< \eta_1$ \textit{but} the interpolation set $Y$ is adequate, then $\Delta_{k+1}=\gamma_1 \Delta_k$. More interpolation points  
 
   \hspace{1.2cm}    can be sampled.

\hspace{.2cm}Increment $k$ by $1$ and go to Step 1.
                \end{flushleft}        
    \end{algorithm}
    
    \begin{algorithm}[H]
            \caption{Model definition and Step calculation } 
            \label{alg:ALGmo}
             \begin{flushleft}
                \textbf{Step 0: Initialization.} The current point $x^k$ and the current trust-region radius $\Delta_k>0$ are given. The constant $\eta_3\geq1$, $S=\{s_1,s_2, \dots, s_{n_w}\}$ and $T=\{t_1,t_2, \dots, t_{n_b}\}$ are also given. \\
             \end{flushleft}  
            \begin{flushleft}
                \textbf{Step 1: Spliting the data.} Split $S=\{s_1,s_2, \dots, s_{n_w}\}$ into the training set $S_{\text{Tr}}=\{s_1,s_2, \dots, s_{n_w'}\}$ and the test set $S_{\text{Te}}=\{s_1,s_2, \dots, s_{n_w''}\}$. Split $T=\{t_1,t_2, \dots, t_{n_b}\}$ into the training set $T_{\text{Tr}}=\{t_1,t_2, \dots, t_{n_b'}\}$ and the test set $T_{\text{Te}}=\{t_1,t_2, \dots, t_{n_b''}\}$.\\
             \end{flushleft}

             \begin{flushleft}
                \textbf{Step 2: Training the model.}  Construct a model $m_k(x)$ for $f(x)$ within the current trust-region with radius $\Delta_k$ on the training interpolation set and minimize the loss function on the test interpolation set. We use MSE as the loss function and minimize 
                \begin{equation}
                MSE=MSE_{w}+MSE_{b},
                \end{equation}
where                
               $$MSE_{w}=\frac{1}{n_w''}\sum_{i=1}^{n_w''}[m_k(x_i)-f(x_i)]^2,$$
and 
                $$MSE_{b}=\frac{1}{n_b''}\sum_{i=1}^{n_b''}[m_k(x_i)-f(x_i)]^2, $$               
              \end{flushleft}   
on the test interpolation sets. For any point $x=(x_1, x_2, \dots, x_n)$, all partial derivatives with respect to $x_1, x_2, \dots, x_n$ of the model obtained in step 1  are available through backpropagation. 
                 \begin{flushleft}
               
                \textbf{Step 3: Step Calculaton.} Now, we look for a trial step $s^k$ that minimizes the model obtained in step 1. Suppose 
               \begin{equation}
               s^k=\sum_{i=1}^{n} w_i^g e_i,
               \end{equation}
               where $e_i$'s are the standard unit vectors in $n$ dimensions. All we need is to determine values of $w_i^g$, for $i=1, 2, \dots, n$, such that an appropriate loss function for a parent-child net such as one that is defined in (\ref{may26-1}) meets its minimum or an approximate of it. 
             \end{flushleft}   
                       
         \end{algorithm}   

We can apportion $Y$ into training and test sets, with an $80 \%-20 \% $ split, which means $20 \% $ of the interpolation points that should be held over for testing. If we relax the quadratic model assumption, we need to have enough interpolation data points to train a robust model. If the cardinality of interpolation set is not large enough, we might use machine-learning techniques such as $k$-fold cross-validation. In \textit{Python}, we can import the \textit{train-test-split} and the \textit{cross-val-score} from \textit{sklearn.model-selection} library to split $Y$ into training and test sets, and use cross-validation technique, respectively. As an alternative, we might sample more interpolation points to find the model minimizer that reduces the objective function sufficiently if it is needed.

\subsection{Convergence analysis of Algorithm \ref{alg:ALG2}}
In Algorithm \ref{alg:ALG2} the trained neural-network model $m_{k}^{NN}$ is not required to be quadratic, and the rest of its structure is almost identical to Algorithm \ref{alg:ALG1}. So we expect the convergence analysis results of Algorithm \ref{alg:ALG1} remain valid for Algorithm \ref{alg:ALG2}. At each iteration, we still use differentiable loss and activation functions to train a neural-network model, which means Lemma \ref{NTR:lemma1} and Lemma \ref{NTR:lemma2} are still valid for Algorithm \ref{alg:ALG2}.

\begin{lemma} \label{may48}
In Algorithm \ref{alg:ALG2}, where a black-box model and the loss function given by (\ref{may42}) are employed to train a neural-network model $m_k^{NN}$, for all $k$ and for some $\bar{c} >0$ the subproblem solution satisfies 
\begin{equation}\label{emrooz2appp}
\left[m^{NN}_k(x^k)-m^{NN}_k(x^+)\right]\geq \bar{c}\left[m_k(x^k)-m_k(x^k +s^C\right],
\end{equation}
where $s^C=-t_k^C \nabla m_k(x^k)$ is the Cauchy step.
\end{lemma}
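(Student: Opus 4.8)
\textit{Proof plan.} The plan is to imitate the proof of the corollary that follows Theorem~\ref{NTR-Theorem1} (the one establishing~(\ref{emrooz2app})), adapting it to the black-box model $m_k^{NN}$. Two ingredients are needed: (i) a Cauchy-type model reduction
\begin{equation*}
m_k^{NN}(x^k)-m_k^{NN}(x^+)\ge\beta\,\|s^k\|^2
\end{equation*}
for some $\beta>0$ and all $k$; and (ii) an upper bound on the classical Cauchy decrease $m_k(x^k)-m_k(x^k+s^C)$ that is also of order $\Delta_k^2$. Dividing (i) by (ii) then produces the $k$-independent constant $\bar c>0$ appearing in~(\ref{emrooz2appp}).

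For (i) I cannot reuse the argument of Theorem~\ref{NTR-Theorem1} verbatim, since that proof expanded the quadratic model exactly and exploited the optimality residual $L_2$; here $m_k^{NN}$ is a black box. Instead I would invoke the training property that the parent--child net drives the second term of the loss~(\ref{may42}), namely $\gamma_2\bigl\|m_k^{NN}(x^k+s^k)-m_k^{NN}(x^k)+\beta'\|s^k\|^2-\beta''\bigr\|$, to (near) zero, so that at a zero-residual weight vector
\begin{equation*}
m_k^{NN}(x^k)-m_k^{NN}(x^+)=\beta'\|s^k\|^2-\beta''.
\end{equation*}
Taking $\beta''=0$ (or bounding $\beta''\le\tfrac12\beta'\|s^k\|^2$ via an extra penalty) yields (i) with $\beta:=\tfrac12\beta'>0$. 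One also uses that the term $L_{\Delta_k}$ in~(\ref{may42}) is driven to zero, so that $\|s^k\|\le\Delta_k$ (or, under the relaxed-boundary option, $\|s^k\|=\Theta(\Delta_k)$, which only affects constants).

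For (ii), write $\|s^k\|=\bar c_0\Delta_k$ with $\bar c_0\in(0,1]$; then the model reduction from (i) is at least $\beta\bar c_0^2\Delta_k^2$, i.e.\ of order $\Delta_k^2$. On the other side, Lemma~\ref{NTR:lemma2} gives $\|\nabla_{xx}m_k^{NN}\|\le\kappa_{hm}$, so the Cauchy step $s^C$ (which satisfies $\|s^C\|\le\Delta_k$) obeys
\begin{equation*}
m_k(x^k)-m_k(x^k+s^C)\le\Delta_k\,\|\nabla m_k(x^k)\|+\tfrac12\kappa_{hm}\Delta_k^2,
\end{equation*}
and under the normalization $\Delta_k\approx\|\nabla m_k(x^k)\|$ used elsewhere in the paper this right-hand side is also of order $\Delta_k^2$. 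Combining the two bounds gives $\bar c>0$ with $m_k^{NN}(x^k)-m_k^{NN}(x^+)\ge\bar c\,[m_k(x^k)-m_k(x^k+s^C)]$, which is~(\ref{emrooz2appp}).

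The main obstacle is step (i): unlike the quadratic case, one must argue --- rather than compute --- that the network can simultaneously drive the $L_{\Delta_k}$ term and the ``$\beta'\|s^k\|^2$'' term of~(\ref{may42}) to (near) zero while keeping the slack $\beta''$ controlled, so that the reduction is genuinely $\Omega(\|s^k\|^2)$ uniformly in $k$; this is in effect a training-capability hypothesis analogous to Assumption~\ref{NTR:assum3}. A secondary, more technical gap is the informal $\Delta_k\approx\|\nabla m_k(x^k)\|$ normalization in (ii): to make the comparison rigorous one should instead split into the cases $\|\nabla m_k(x^k)\|\ge\kappa_{hm}\Delta_k$ and $\|\nabla m_k(x^k)\|<\kappa_{hm}\Delta_k$ inside~(\ref{emroozap1}), and also track how the relaxed-boundary update $\Delta_{k+1}=\gamma\|x^k-x^+\|$ enters the constant $\bar c_0$.
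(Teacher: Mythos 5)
Your proposal follows essentially the same route as the paper's proof: it derives the $\beta'\|s^k\|^2$ reduction from the training conditions (\ref{mayy32}) enforced by the loss (\ref{may42}), writes $\|s^k\|$ as a fraction of $\Delta_k$ to obtain an $O(\Delta_k^2)$ model decrease, and compares this with the Cauchy decrease under the normalization $\Delta_k\approx\|\nabla m_k(x^k)\|$. The gaps you flag --- the uncontrolled slack $\beta''$ and the informal normalization --- are also present in the paper's own argument, which simply asserts that the trained model satisfies (\ref{mayy32}) and invokes the same approximation without further justification.
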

\begin{proof}
The trained model satisfies (\ref{mayy32}), so we have
\begin{equation}\label{may41}
m_k^{NN}(x^k)-m_k^{NN}(x^+)\geq \beta' \|s^k\|^2,
\end{equation}
and since $\|s^k\| \leq \Delta_k$, there exists ${c} \in (0,1]$ such that $\|s^k\| =\sqrt{{c}} \Delta_k$. So from (\ref{may41}), we have
\begin{equation}
m_k^{NN}(x^k)-m_k^{NN}(x^+)\geq \beta' {c} \Delta_k^2,
\end{equation}
which means the model reduction is of order $\Delta_k^2$. On the other hand, without loss of generality, if we assume $\Delta_k \approx \|\nabla m_k(x^k)\|$, then the the model decrease attained by the Cauchy step in (\ref{emroozap1}) is of order $\Delta_k^2$. Therefore, there exists $\hat{c} >0$ that satisfies (\ref{emrooz2appp}).
\end{proof}

\begin{lemma}\label{radiustozero1}
Any sequence $\{ \Delta_k\}$ produced by Algorithm \ref{alg:ALG2} satisfies
\begin{equation}
\lim\limits_{k \to \infty} \Delta_k=0.
\end{equation}
\end{lemma}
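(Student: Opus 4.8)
The plan is to adapt, essentially line for line, the contradiction argument used for Lemma~\ref{radiustozero}, replacing the sufficient‑decrease estimate of Theorem~\ref{NTR-Theorem1} by the black‑box analogue already recorded in Lemma~\ref{may48}. Suppose $\Delta_k\not\to 0$. First I would argue there are infinitely many successful iterations ($\rho_k\ge\eta_1$): if not, then past some index every iteration is unsuccessful, and Step~3 of Algorithm~\ref{alg:ALG2} then either multiplies $\Delta_k$ by $\gamma_1<1$ (the adequate case) or leaves it unchanged during a geometry‑improvement step; granting that the geometry phase terminates, eventually $\Delta_k$ is multiplied by $\gamma_1$ at every step, forcing $\Delta_k\to 0$ --- a contradiction. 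Let $S=\{k_1<k_2<\cdots\}$ be the successful indices.

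Next I would quantify the objective decrease. At a successful iteration the trained model and step, produced by minimizing $L_{BNTR}$ in (\ref{may42}), satisfy (\ref{mayy32}); in particular (\ref{may41}) holds, i.e.\ $m_k^{NN}(x^k)-m_k^{NN}(x^+)\ge\beta'\|s^k\|^2$ (this is the content of Lemma~\ref{may48}), so $f(x^k)-f(x^{k+1})\ge\eta_1\beta'\|s^k\|^2$. Telescoping over $S$,
\begin{equation*}
f(x^{k_1})-f(x^{k_{i+1}})\;\ge\;\eta_1\,\beta'\sum_{q=1}^{i}\bigl\|s^{k_q}\bigr\|^2 ,
\end{equation*}
and since $f$ is bounded below (Assumption~\ref{NTR assum1}) the left side is uniformly bounded, hence $\sum_q\|s^{k_q}\|^2<\infty$ and $\|s^{k_q}\|\to 0$. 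Invoking the standing relation $\|s^k\|=\bar c\,\Delta_k$ with a fixed $\bar c\in(0,1]$ (as used already in Lemma~\ref{may48} and in the corollary to Theorem~\ref{NTR-Theorem1}), this gives $\Delta_{k_q}\to 0$. Finally, between two consecutive successful iterations the radius is non‑increasing, and immediately after $k_q$ it is at most $\gamma_3\Delta_{k_q}$; thus $\Delta_k\le\gamma_3\Delta_{k_q}$ for all $k_q<k\le k_{q+1}$, so $\Delta_k\to 0$ over the whole sequence, contradicting the assumption.

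The step I expect to be the real obstacle is making rigorous the comparability $\|s^k\|\ge\bar c\,\Delta_k$ with $\bar c$ \emph{uniform} in $k$. In a classical trust‑region method this follows from a Cauchy inequality together with a uniform lower bound on $\|\nabla m_k\|$ away from stationarity; here, with a nonconvex black‑box model and steps that may be accepted on (or just inside) the boundary, it has to be read off the loss: the term $L_{\Delta_k}$ penalizes over‑long steps, while the Cauchy‑type term of (\ref{may42}) forces the accepted step to realize a reduction of order $\|s^k\|^2$ consistent with $\rho_k\approx\eta''$, so an accepted step cannot be a vanishing fraction of $\Delta_k$ while $x^k$ is still non‑stationary. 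A secondary technical point, shared with the poisedness machinery of \cite{conn1997convergence}, is to guarantee that stretches of inadequate‑unsuccessful iterations --- during which $\Delta_k$ is frozen --- cannot persist forever. Once these two facts are in hand, the telescoping computation above closes the proof, and the subsequent Clarke‑stationarity results for Algorithm~\ref{alg:ALG2} follow exactly as in Theorems~\ref{NTR-Theorem2} and \ref{NTR-Theorem3}.
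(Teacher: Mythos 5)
Your proposal follows essentially the same route as the paper, whose entire proof of this lemma is the sentence ``The proof is identical with the one for Lemma \ref{radiustozero}'': contradiction, infinitely many successful iterations, the sufficient-decrease bound $m_k^{NN}(x^k)-m_k^{NN}(x^+)\ge\beta'\|s^k\|^2$ coming from (\ref{mayy32}) and Lemma \ref{may48}, telescoping over the successful indices, and boundedness below of $f$. The comparability $\|s^k\|\ge\bar c\,\Delta_k$ that you flag as the real obstacle is precisely the step the paper also leaves unjustified --- its template argument in Lemma \ref{radiustozero} asserts that $\sum_q\|s^{k_q}\|^2$ diverges without showing $\|s^{k_q}\|$ stays bounded away from zero on the subsequence where $\Delta_{k_q}>\epsilon$ --- so your rendering is, if anything, the more careful version of the same idea.
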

\begin{proof}
The proof is identical with the one for Lemma \ref{radiustozero}.
\end{proof}

Lemmas \ref{may48} and \ref{radiustozero1} guarantee the validity of Theorems \ref{NTR-Theorem1} and \ref{NTR-Theorem2} for Algorithm \ref{alg:ALG2}.

\section{Conclusion}
In this paper (part 1), we introduced a new derivate-free trust-region method in which an artificial neural-network is used to approximate a model of the objective function within the trust-region and solve the corresponding subproblem.

\bibliographystyle{unsrt}  

\end{document}